\newtheorem{theorem}{Theorem}[section]
\newcommand{\qq}{\mathbb{Q}}
\newcommand{\zz}{\mathbb{Z}}
\newcommand{\cc}{\mathbb{C}}
\newcommand{\pp}{\mathbb{P}}
\renewcommand{\gg}{\mathbb{G}}
\newcommand{\E}{\mathcal{E}}
\renewcommand{\O}{\mathscr{O}}
\renewcommand{\tilde}{\widetilde}
\newcommand{\B}{\mathcal{B}}
\newcommand{\U}{\mathcal{U}}
\DeclareMathOperator{\codim}{codim}
\DeclareMathOperator{\Aut}{Aut}
\newtheorem{Lemma}[theorem]{Lemma}
\newtheorem{Corollary}[theorem]{Corollary}
\DeclareMathOperator{\rank}{rank}
\theoremstyle{definition}
\newtheorem{Example}[theorem]{Example}
\theoremstyle{remark}
\newtheorem*{remark}{Remark}
\theoremstyle{remark}
\newcommand{\V}{\mathcal{V}}
\newcommand{\W}{\mathcal{W}}
\newcommand{\GL}{\mathrm{GL}}
\newcommand{\PGL}{\mathrm{PGL}}
\newcommand{\BGL}{\mathrm{BGL}}
\newcommand{\SL}{\mathrm{SL}}
\newcommand{\ch}{\mathrm{ch}}
\newcommand{\td}{\mathrm{td}}
\newcommand{\CH}{\mathrm{CH}}
\tikzset{cong/.style={draw=none,edge node={node [sloped, allow upside down, auto=false]{$\cong$}}},
         Isom/.style={draw=none,every to/.append style={edge node={node [sloped, allow upside down, auto=false]{$\cong$}}}}}
\numberwithin{equation}{section}
\title{The intersection theory of the moduli stack of vector bundles on $\pp^1$}
\author{Hannah K. Larson}
\thanks{During the preparation of this article, the author was supported by the Hertz Foundation and NSF GRFP under grant DGE-1656518.}
\begin{document}
\maketitle

\begin{abstract}
We determine the integral Chow and cohomology rings of the moduli stack $\B_{r,d}$ of rank $r$, degree $d$ vector bundles on $\pp^1$ bundles.
We first show that the rational Chow ring $A_\qq^*(\B_{r,d})$ is a free $\qq$-algebra on $2r+1$ generators. The isomorphism class of this ring happens to be independent of $d$. Then, we prove that the integral Chow ring $A^*(\B_{r,d})$ is torsion-free and provide multiplicative generators for $A^*(\B_{r,d})$ as a subring of $A_{\qq}^*(\B_{r,d})$. From this description, we see that $A^*(\B_{r,d})$ is not finitely generated as a $\zz$-algebra.
Finally, the cohomology ring of $\B_{r,d}$ is isomorphic to its Chow ring.
\end{abstract}

\section{Introduction}

In this paper, we fully describe the intersection theory of the moduli stack $\B_{r,d}$ of vector bundles on $\pp^1$ bundles. Precisely, an object of $\B_{r,d}$ over a scheme $T$ is the data of a rank $2$ vector bundle $W$ on $T$ and a rank $r$, relative degree $d$ vector bundle $E$ on $\pp W$.
To describe generators of the Chow or cohomology ring,
let $\pi: \pp \W \rightarrow \mathcal{B}_{r,d}$ be the universal $\pp^1$ bundle and let $w_1 = c_1(\W)$ and $w_2 = c_2(\W)$ be Chern classes of the universal rank $2$ bundle $\W$.
Let $\E$ be the universal rank $r$ bundle on $\pp \W$. If $z = c_1(\O_{\pp \W}(1))$, the Chern classes of $\E$ are uniquely expressible as $c_i(\E) = \pi^*(a_i) + \pi^*(a_i') z$ for $a_i \in A^i(\B_{r,d})$ and $a_i' \in A^{i-1}(\B_{r,d})$. We show that the rational Chow ring of $\B_{r,d}$ is freely generated by these classes. Then, we show that the integral Chow ring is torsion-free, and describe it as a subring of the rational Chow ring. This also determines the cohomology ring of $\B_{r,d}$, as it agrees with the Chow ring.

\begin{theorem}  \label{main}
We have $A_{\qq}^*(\B_{r,d}) = \qq[w_1, w_2, a_1, \ldots, a_r, a_2', \ldots, a_r']$. The integral Chow ring $A^*(\B_{r,d}) \subset A_{\qq}^*(\B_{r,d})$ is the subring generated by $w_1, w_2$ and the Chern classes of $\pi_*\E(i)$ for $i = 0, 1, 2, \ldots$. Moreover, the cycle class map $A^*(\B_{r,d}) \to H^{2*}(\B_{r,d})$ is an isomorphism.
\end{theorem}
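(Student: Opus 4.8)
The plan is to determine $A^*(\B_{r,d})$ by stratifying $\B_{r,d}$ according to the relative splitting type of $\E$, and then to compare with singular cohomology stratum by stratum. For a weakly decreasing $\vec e=(e_1\ge\cdots\ge e_r)$ with $\sum_i e_i=d$, let $\B_{\vec e}\subseteq\B_{r,d}$ be the locally closed substack over which $\E$ restricts to $\bigoplus_i\O(e_i)$ on every fiber of $\pp\W$. Grouping the $e_i$ into their distinct values $f_1>\cdots>f_s$ with multiplicities $m_1,\dots,m_s$, the canonical relative filtration of $\E$ has associated graded $\bigoplus_j \pi^*V_j\otimes\O_{\pp\W}(f_j)$ for rank $m_j$ bundles $V_j$ on the base, and from this one identifies
\[
\B_{\vec e}\;\cong\;B\!\left(\Aut_{\vec e}\rtimes\GL_2\right),
\]
where $\Aut_{\vec e}$ is the automorphism group of $\bigoplus_i\O(e_i)$ on $\pp^1$ — an extension of its Levi $\prod_j\GL_{m_j}$ by the unipotent vector group $\bigoplus_{j<k}\mathrm{Hom}(m_j,m_k)\otimes H^0(\pp^1,\O(f_j-f_k))$ — and $\GL_2$ acts via its action on $\pp^1=\pp(\W)$. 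A vector group occurring as such a unipotent radical contributes nothing to Chow groups, since $B(V\rtimes G)\to BG$ is an iterated affine bundle and hence an isomorphism on $A^*$; and $\GL_2$ acts trivially on the Levi. Therefore
\[
A^*(\B_{\vec e})\;=\;\zz[w_1,w_2]\otimes\bigotimes_{j=1}^{s}\zz[c_1(V_j),\dots,c_{m_j}(V_j)]
\]
is a polynomial ring, and the same argument computes $H^*(\B_{\vec e})$ to be this ring placed in even degrees.

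Since $A^k$ is unaffected by excising substacks of codimension exceeding $k$, it suffices to work on the open substack $\mathcal X_c\subseteq\B_{r,d}$ which is the union of the finitely many strata $\B_{\vec e}$ with $u(\vec e):=\sum_{i<j}\max(e_i-e_j-1,0)\le c$; ordering strata by codimension exhibits $\mathcal X_c$ as $\mathcal X_{c-1}$ with closed strata of codimension $c$ adjoined. The normal bundle of $\B_{\vec e}$ is associated to the representation $H^1(\pp^1,\mathrm{End}(\bigoplus_i\O(e_i)))$ of $\Aut_{\vec e}\rtimes\GL_2$; restricted to a maximal torus none of its weights vanish (the $\mathrm{Hom}(m_j,m_k)$ factors with $j\ne k$ already force this), so its top Chern class is a nonzero, hence nonzerodivisor, element of the polynomial ring $A^*(\B_{\vec e})$. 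By the self-intersection formula $i^*i_*=c_{\mathrm{top}}(N)$, each Gysin pushforward $i_*\colon A^{*-u(\vec e)}(\B_{\vec e})\to A^*(\mathcal X_c)$ is injective, so the localization sequences are short exact; inducting on $c$ shows $A^*(\B_{r,d})$ is torsion-free, equals $\bigoplus_{\vec e}A^{*-u(\vec e)}(\B_{\vec e})$ as a graded group, and has Poincaré series $\sum_{\vec e}t^{u(\vec e)}\cdot\frac1{(1-t)(1-t^2)}\prod_j\prod_{l=1}^{m_j}(1-t^l)^{-1}$. The identical bookkeeping in cohomology — all pieces even, Gysin maps injective by the same formula — and the five lemma give that $A^*(\mathcal X_c)\to H^{2*}(\mathcal X_c)$ is an isomorphism for each $c$, hence $A^*(\B_{r,d})\to H^{2*}(\B_{r,d})$ is an isomorphism, once the ring structure is pinned down.

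To identify the ring rationally I would prove that $w_1,w_2,a_1,\dots,a_r,a_2',\dots,a_r'$ generate $A^*_\qq(\B_{r,d})$. By induction along the stratification and the projection formula, it is enough to show each stratum-closure class $[\overline{\B_{\vec e}}]$ lies in $\qq[w_1,w_2,a_i,a_i']$. Each $\overline{\B_{\vec e}}$ is cut out in $\B_{r,d}$ by rank conditions on the vector bundle maps controlling the fiberwise dimensions $h^0(\E(m))$, so Thom–Porteous expresses $[\overline{\B_{\vec e}}]$ as a Schur polynomial in the Chern classes of the complexes $R\pi_*\E(m)$, and Grothendieck–Riemann–Roch
\[
\ch\big(R\pi_*\E(m)\big)\;=\;\pi_*\!\big(\ch(\E)\cdot e^{mz}\cdot\td(T_\pi)\big),\qquad c_1(T_\pi)=2z-\pi^*w_1,
\]
expresses those in turn as polynomials in $w_1,w_2,a_i,a_i'$. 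The Poincaré series of the previous paragraph must therefore equal that of the claimed polynomial ring, $\frac1{(1-t)(1-t^2)}\prod_{i=1}^{r}(1-t^i)^{-1}\prod_{i=1}^{r-1}(1-t^i)^{-1}$ — a combinatorial identity in the splitting types whose right-hand side is manifestly independent of $d$, and which I would verify directly via generating functions — so the evident surjection from $\qq[w_1,w_2,a_1,\dots,a_r,a_2',\dots,a_r']$ is an isomorphism.

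Finally, for the integral statement one refines the generation step. Restricting to a stratum $\B_{\vec e}$, the complex $R\pi_*\E(i)$ is the iterated extension of the $V_j\otimes R\pi_*\O_{\pp\W}(f_j+i)$, each a (possibly shifted) twist of $V_j$ by a symmetric-power bundle of known Chern classes; comparing these across consecutive values of $i$ peels off, integrally, the Chern classes of each $V_j$ in terms of $w_1,w_2$. Since only finitely many $i$ are needed in each codimension but all $i\ge0$ enter as the codimension grows, and since these Chern classes agree with those of $\pi_*\E(i)$ where $\pi_*\E(i)$ is locally free, this identifies $A^*(\B_{r,d})$ with the subring of $A^*_\qq(\B_{r,d})$ generated by $w_1,w_2$ and the Chern classes of the $\pi_*\E(i)$, $i\ge0$. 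I expect the main obstacle to be exactly this last, integral, refinement — together with making the degeneracy-locus descriptions of the $\overline{\B_{\vec e}}$ precise enough to feed into Thom–Porteous — since the rational computation, the cohomology comparison, and the torsion-freeness all then follow formally from the stratification once the strata, their normal bundles, and these determinantal descriptions are in hand.
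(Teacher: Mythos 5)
Your proposal's central gap is in the generation statements, which are the heart of the theorem and which the paper obtains from a tool you never introduce: the Bolognesi--Vistoli presentation of the stack of globally generated bundles as $[\Omega_{r,d}/(\GL_d\times\GL_{r+d}\times\GL_2)]$. That presentation exhibits each open $\U_m\subset\B_{r,d}$ as an open substack of a vector bundle over $\BGL_d\times\BGL_{r+d}\times\BGL_2$, so $A^*(\U_m)$ is \emph{integrally} generated by $c_i(\W)$, $c_i(\pi_*\E(m-1))$, $c_i(\pi_*\E(m))$ with no further work, and the rational relations are computed by resolving the complement $\Omega_{r,d}^c$ and pushing forward. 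Your substitute --- induction on strata plus Thom--Porteous for $[\overline{\Sigma_{\vec e}}]$ --- breaks down at two points. First, for $r\ge 3$ the closure of a splitting stratum is in general an \emph{intersection} of several degeneracy loci $\{h^0(E(m))\ge c_m\}$, each of strictly smaller codimension than $u(\vec e)$ (e.g.\ $\overline{\Sigma_{(-2,0,2)}}$ has codimension $5$ but sits inside the codimension-$4$ locus $\{h^0(E(-1))\ge 2\}$), so its class is not a Schur polynomial given by Thom--Porteous; the existence of universal tautological formulas for these classes is a theorem in its own right, which this paper in fact \emph{deduces} from the quotient presentation rather than the other way around. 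Second, even granting that, your induction also needs $A^*(\Sigma_{\vec e})$ (integrally, for the integral statement) to be generated by restrictions of the proposed classes --- the ``peeling off'' of the $c_k(V_j)$ from the $\pi_*\E(i)$ is only gestured at, and it is exactly the kind of integral statement that is delicate --- and it needs the Poincar\'e series identity $\sum_{\vec e}t^{u(\vec e)}\prod_j\prod_{l\le m_j}(1-t^l)^{-1}=\prod_{i\le r}(1-t^i)^{-1}\prod_{i\le r-1}(1-t^i)^{-1}$, which you assert but do not prove and which is a nontrivial combinatorial fact (the paper never needs it, because it computes the relations directly).

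Your treatment of torsion-freeness and the cycle class map, by contrast, is a genuinely different route from the paper's and appears sound. The paper proves injectivity of the Gysin maps $i_*\colon A^{*-u(\vec e)}(\Sigma_{\vec e})\to A^*(X)$ using higher Chow groups with $\zz/p$ coefficients and Suslin's vanishing for $\CH^*(pt,1,\zz/p)$ --- an argument valid a priori only over $\overline{\qq}$-like fields, extended to all characteristics by a rank count. Your argument via the self-intersection formula $i^*i_*=c_{\mathrm{top}}(N)$ needs only that $c_{\mathrm{top}}(N)$ is a nonzerodivisor in the integral domain $A^*(\Sigma_{\vec e})$, which follows since every torus weight of $H^1(\pp^1,\mathcal{E}nd(\O(\vec e)))$ contains a nonzero character of the Levi $\prod\GL_{r_i}$ (only off-diagonal blocks contribute to $H^1$); this works uniformly in all characteristics and also handles the cohomological Gysin maps. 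If carried out carefully this is a real simplification of that part of the paper --- but it does not repair the generation gap above, without which the identification of $A^*(\B_{r,d})$ as the stated subring of $\qq[w_1,w_2,a_i,a_i']$ is not established.
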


In the special case when the $\pp^1$ bundle is trivial ($W$ is trivial), the integral Chow ring has a somewhat simpler description. This describes the Chow ring of the moduli space $\B_{r,d}^\dagger$ of vector bundles on a fixed $\pp^1$. (Precisely, an object of $\B_{r,d}^\dagger$ over a scheme $T$ is a rank $r$, relative degree $d$ vector bundle $E$ on the trivial $\pp^1$ bundle $\pp^1 \times T$.) The map $\B_{r,d}^\dagger \to \B_{r,d}$ is the $\GL_2$ bundle associated to $\W$ on $\B_{r,d}$. By \cite[Theorem 2]{V} of Vistoli, the pullback $A^*(\B_{r,d}) \to A^*(\B_{r,d}^\dagger)$ is surjective with kernel generated by $w_1, w_2$.
 
\begin{theorem} \label{m2}
We have $A_{\qq}^*(\B_{r,d}^\dagger) = \qq[a_1, \ldots, a_r, a_2', \ldots, a_r']$.
Integrally, $A^*(\B_{r,d}^\dagger) \subset A_{\qq}^*(\B_{r,d}^\dagger)$ is the subring generated by
$a_1, \ldots, a_r$ and the coefficients $f_i$ of the power series
\begin{equation*}
\sum_{i=0}^\infty f_i t^i = \exp\left(\int \frac{d \cdot (a_1 +a_2t +\ldots + a_r t^{r-1}) - (a_2' + a_3't + \ldots + a_r' t^{r-2})}{1 + a_1t + \ldots + a_rt^r}dt \right).
\end{equation*}
Moreover, the cycle class map $A^*(\B_{r,d}^\dagger) \to H^{2*}(\B_{r,d}^\dagger)$ is an isomorphism.
\end{theorem}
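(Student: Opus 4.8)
The plan is to derive Theorem~\ref{m2} from Theorem~\ref{main} together with Vistoli's description of $A^*(\B_{r,d}) \to A^*(\B_{r,d}^\dagger)$ recalled above. Since that pullback is surjective with kernel $(w_1, w_2)$, and the same holds after $\otimes\,\qq$, we get at once
\[
A_\qq^*(\B_{r,d}^\dagger) = A_\qq^*(\B_{r,d})/(w_1,w_2) = \qq[a_1,\ldots,a_r,a_2',\ldots,a_r'],
\]
writing $a_i,a_i'$ also for the images of these classes (the universal bundle $\E$ on $\pp^1\times\B_{r,d}^\dagger$ is pulled back from $\pp\W$, and $z$ restricts to the point class on the fibers). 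Integrally, $A^*(\B_{r,d}^\dagger)$ is the \emph{image} of $A^*(\B_{r,d})$, hence the subring of $A_\qq^*(\B_{r,d}^\dagger)$ generated by $w_1\mapsto 0$, $w_2\mapsto 0$, and the Chern classes of $\pi_*\E(i)$ for $i\geq 0$. So everything reduces to computing these Chern classes on the trivial $\pp^1$-bundle $\pi\colon\pp^1\times\B_{r,d}^\dagger\to\B_{r,d}^\dagger$.

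I would do this in $K$-theory. With $T=\B_{r,d}^\dagger$ one has $K^0(\pp^1\times T)=K^0(T)\otimes_\zz\zz[h]/(h^2)$ for $h=1-[\O(-1)]$ (so $\ch(h)=z$); write $[\E]=\pi^*\alpha+\pi^*\beta\cdot h$. Restricting to $\{p\}\times T$ kills $h$, so $c(\alpha)=c(\E|_{\{p\}\times T})=1+a_1+\cdots+a_r$. Using $[\O(i)]=(1+h)^i=1+ih$ and that $\pi_*$ sends both $\pi^*\gamma$ and $\pi^*\gamma\cdot h$ to $\gamma$ (because $\chi(\O_{\pp^1})=1$ and $\chi(\O_{\pp^1}(-1))=0$), one finds $[R\pi_*\E(i)]=(i+1)\alpha+\beta$ in $K^0(T)$, with $\beta=[R\pi_*\E]-\alpha$. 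Hence
\[
c(\pi_*\E(i))=(1+a_1+\cdots+a_r)^{i+1}\,c(\beta).
\]
Comparing $i=1$ and $i=0$ gives $1+a_1+\cdots+a_r=c(\pi_*\E(1))\cdot c(\pi_*\E)^{-1}$, so $a_1,\ldots,a_r$ and then $c(\beta)=c(\pi_*\E)\cdot(1+a_1+\cdots+a_r)^{-1}$ lie in the subring generated by the $c_j(\pi_*\E(i))$; conversely each $c(\pi_*\E(i))$ is a polynomial in $a_1,\ldots,a_r$ and the components of $c(\beta)$. Thus $A^*(\B_{r,d}^\dagger)$ is the subring generated by $a_1,\ldots,a_r$ and the Chern classes $c_j(\beta)$.

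It remains to identify $c_j(\beta)$ with the coefficients $f_j$ of the displayed power series. For this I would compute $\ch(\beta)=\pi_*\ch(\E)$: writing $\sum_i c_i(\E)t^i=\pi^*A(t)+z\,\pi^*\tilde A'(t)$ with $A(t)=1+a_1t+\cdots+a_rt^r$ and $\tilde A'(t)=dt+a_2't^2+\cdots+a_r't^r$, the splitting principle together with $z^2=0$ gives $\sum_k\log(1+x_kt)=\pi^*\log A(t)+z\,\pi^*\!\big(\tilde A'(t)/A(t)\big)$ for the Chern roots $x_k$, whence $\pi_*\ch(\E)=\sum_{n\geq 1}\tfrac{(-1)^{n-1}}{(n-1)!}\,[t^n]\big(\tilde A'(t)/A(t)\big)$. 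Feeding this into $c(\beta)=\exp\!\big(\sum_{n\geq 1}(-1)^{n-1}(n-1)!\,\ch_n(\beta)\big)$ (Newton's identities) and simplifying the right-hand side, the exponent becomes exactly $\int \tfrac{d(a_1+\cdots+a_rt^{r-1})-(a_2'+\cdots+a_r't^{r-2})}{1+a_1t+\cdots+a_rt^r}\,dt$ once one tracks the grading by $t$; so $c_j(\beta)=f_j$, completing the integral statement.

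Given Theorem~\ref{main} there is no conceptual obstacle here: this reconciliation of the two generating functions is the only genuine computation, and it is routine bookkeeping with Chern characters. Finally, the cohomology statement follows by running the same reduction with $H^*$ in place of $A^*$: the $\GL_2$-torsor $\B_{r,d}^\dagger\to\B_{r,d}$ has a Serre spectral sequence in which the exterior generators $e_1,e_3$ of $H^*(\GL_2;\zz)$ transgress to $w_1,w_2$, so — using that $H^*(\B_{r,d})$ is concentrated in even degrees, which comes from the proof of Theorem~\ref{main} — it degenerates to give $H^*(\B_{r,d}^\dagger)\cong H^*(\B_{r,d})/(w_1,w_2)$; combining this with $A^*(\B_{r,d})\cong H^{2*}(\B_{r,d})$ and Vistoli's $A^*(\B_{r,d}^\dagger)\cong A^*(\B_{r,d})/(w_1,w_2)$ yields $A^*(\B_{r,d}^\dagger)\cong H^{2*}(\B_{r,d}^\dagger)$. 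The one external input to watch is thus the vanishing of $H^{\mathrm{odd}}(\B_{r,d})$.
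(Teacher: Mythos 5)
Your reduction of the rational statement and of the \emph{image} of $A^*(\B_{r,d}) \to A^*(\B_{r,d}^\dagger)$ to Vistoli's theorem matches the paper, and your $K$-theoretic computation of the generating function is correct and is a clean alternative to the paper's Lemma \ref{F}: writing $[R\pi_*\E(i)] = (i+1)\alpha + \beta$ with $\alpha = [\E|_{\{p\}\times T}]$ exploits exactly the sequence $0 \to \E(i-1) \to \E(i) \to \E|_{\{p\}\times T} \to 0$ that the paper's remark notes is special to trivial families, and your bookkeeping with $\tilde A'(t)/A(t)$ does reproduce $F(t)$. (Two small points: comparing $i=1$ with $i=0$ only recovers $a_1,\ldots,a_r$ in degrees $\leq d$ since $c_j(\pi_*\E)$ is only defined on $\B_{r,d}^\dagger$ for $j \leq d$; compare $i=m+1$ with $i=m$ for $m$ large instead. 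And your Serre spectral sequence does not literally degenerate to $H^*(\B_{r,d}^\dagger) \cong H^*(\B_{r,d})/(w_1,w_2)$ in \emph{all} degrees without knowing $w_2$ is a non-zero-divisor on $H^*(\B_{r,d})/(w_1)$ --- the term $E_\infty^{2k-2,3}$ can survive --- though the even-degree part, which is all you need for the cycle class map, does come out right using only $H^{\mathrm{odd}}(\B_{r,d})=0$ and that $w_1$ is a non-zero-divisor.)

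The genuine gap is the assertion that $A^*(\B_{r,d}^\dagger)$ \emph{is} a subring of $A^*_\qq(\B_{r,d}^\dagger)$, i.e.\ that it is torsion-free. Vistoli gives $A^*(\B_{r,d}^\dagger) = A^*(\B_{r,d})/(w_1,w_2)$, and you know $A^*(\B_{r,d})$ is torsion-free, but a quotient of a torsion-free ring by an ideal generated by two of its elements need not be torsion-free: the issue is whether $(w_1,w_2)A^*_\qq(\B_{r,d}) \cap A^*(\B_{r,d}) = (w_1,w_2)A^*(\B_{r,d})$. This can fail for subrings of polynomial rings (e.g.\ for $R = \zz[w,u,\tfrac{wu}{2}] \subset \qq[w,u]$ the class $\tfrac{wu}{2}$ is a nonzero $2$-torsion element of $R/(w)$), and nothing you have established about $A^*(\B_{r,d})$ --- whose integral generators $c_j(\pi_*\E(i))$ involve $w_1,w_2$ with rational correction terms --- rules this out. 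Without it, you have only computed the image of $A^*(\B_{r,d}^\dagger)$ in $A^*_\qq(\B_{r,d}^\dagger)$, not $A^*(\B_{r,d}^\dagger)$ itself; the same missing regular-sequence statement is what your spectral sequence degeneration quietly assumes. The paper closes this by rerunning the splitting-locus argument (Lemmas \ref{affbun}, \ref{5}, \ref{coh}) directly on $\B_{r,d}^\dagger$, using that the strata are $\Sigma^\dagger_{\vec{e}} \cong B\Aut(\O(\vec{e}))$, with Chow rings those of $\prod \BGL_{r_i}$ and vanishing first higher Chow groups with torsion coefficients; you need this (or some substitute) to finish.
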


We point out several interesting features of our results:
\begin{enumerate}
\item Although $A^*_{\qq}(\B_{r,d})$ is obviously finitely generated as a $\qq$ algebra, $A^*(\B_{r,d})$ is \emph{not} finitely generated as a $\zz$ algebra (see Corollary \ref{nfg}).
\item The rational Chow ring $A^*_{\qq}(\B_{r,d})$ is independent of $d$. 
This may lead one to wonder if the isomorphism class of $\B_{r,d}$ could be independent of $d$. However, considering integral Chow rings, one can show that $\B_{2,1}$ and $\B_{2,0}$ (resp. $\B_{2,1}^\dagger$ and $\B_{2,0}^\dagger$) are \emph{not} isomorphic (see Corollary \ref{b2}).
\item To show $A^*(\B_{r,d})$ is torsion-free, we stratify by splitting loci, which in turn are modeled by spaces admitting affine stratifications. This stratification is also what allows us to see that the Chow and cohomology rings of $\B_{r,d}$ agree (see Lemma \ref{coh}). 
\item Using the theory of higher Chow groups, we show that push forward maps for including strata are all injective on Chow. This relies on  a vanishing result for higher Chow groups of a point with torsion coefficients. Although this vanishing result only holds for an algebraically closed field of characteristic zero, we deduce from it a rank equality which allows us to establish our theorem in \emph{all} characteristics.
\end{enumerate}

\begin{remark}
Here, we are considering $\pp^1$ bundles equipped with a relative degree $1$ line bundle. However, not all families of genus $0$ curves admit a relative degree $1$ line bundle. Nevertheless, our work does determine the rational Chow ring $A_{\qq}^*([\B_{r,d}^\dagger/\PGL_2])$, because it is equal to $A_{\qq}^*([\B_{r,d}^\dagger/\SL_2])$ (the $\SL_2$ quotient is a $\mu_2$ gerbe over the $\PGL_2$ quotient). To find the latter, note that $[\B_{r,d}^\dagger/\SL_2] \to \B_{r,d}$ is the $\gg_m$ bundle associated to $\det \W$, so applying Vistoli's theorem  \cite[Theorem 2]{V}, we have  
\[A_{\qq}^*([\B_{r,d}^\dagger/\SL_2]) = A_{\qq}^*(\B_{r,d})/\langle w_1 \rangle = \qq[w_2, a_1, \ldots, a_r, a_2', \ldots, a_r'].\]
 This result will be used to determine the intersection theory of low-degree Hurwitz spaces by S. Canning and the author in \cite{CL}.
\end{remark}

This paper is organized as follows. In Section \ref{2}, we briefly state necessary results concerning equivariant Chow rings and higher Chow groups. In Section \ref{con}, we describe a sequence of opens $\U_m$ that exhaust $\B_{r,d}$. Following a construction of Bolognesi--Vistoli, each $\U_m$ can be realized as a global quotient.
In Section \ref{chow}, we use this description to calculate the rational Chow ring of $\B_{r,d}$. Finally, in Section 5, we prove that the integral Chow ring is torsion-free and provide a description of the generators.

\subsection*{Acknowledgements} I would like to thank Ravi Vakil for many helpful conversations and in particular pointing me towards ideas of Akhil Mathew and Eric Larson on higher Chow groups. I am grateful to the latter two for explaining higher Chow groups and how to use them. Thanks also to Samir Canning for thoughtful conversations about this work and comments on an earlier version of this paper. 

\section{Preliminaries on equivariant Chow and higher Chow} \label{2}
Most of the stacks we encounter in this paper are quotients of open subsets $X$ of affine space by linear algebraic groups $G$. The Chow ring of a quotient stack $[X/G]$ is defined as the $G$-equivariant Chow ring of $X$, which in turn is defined in \cite{EG} using models based on Borel's mixing construction. More precisely, given a representation $V$ of $G$ and an open subset $U \subset V$ on which $G$ acts freely, if $\codim U^c > i$, we define $A^i([X/G]) = A^i(X \times_G U)$ where $X \times_G U$ is the quotient of $X \times U$ by the diagonal $G$ action.
This is well-defined because $A^*(V) \cong A^*(X)$ whenever $V$ is a vector bundle over $X$ (``homotopy") and $A^i(X - Z) = A^i(X)$ whenever $\codim Z > i$ (``excision").

\begin{Example} \label{bglr}
Let $V_N = \mathrm{Mat}_{r \times N}(k) = k^{\oplus rN}$ with $\GL_r$ acting on $V_N$ by left multiplication.
The group $\GL_r$ acts freely on the open subset $U_N \subset V_N$ of full rank matrices. This determines a model $pt \times_{\GL_r} U_N = U_N/\GL_r = G(r, N)$. Since $\codim U_N^c = N - r + 1$, we have $A^i(\BGL_r) = A^i(G(r,N))$ for $i < N - r + 1$. It is a classical result that $A^*(G(r,N))$ is generated by the Chern classes $c_1, \ldots, c_r$ of the tautological rank $r$ bundle with no relations in degrees less than $N - r +1$. Taking larger and larger $N$, it follows that $A^*(\BGL_r) = \zz[c_1, \ldots, c_r]$.
\end{Example}

Variants of the above construction allow one to approximate all quotient stacks in this paper with concrete models which are fiber bundles over Grassmannians.

The higher Chow groups of these quotient stacks will also be an important tool.
In \cite{B}, Bloch defines the higher Chow groups of a quasi-projective variety $X$ as the homology of a complex $z^*(X,-)$ of free abelian groups, i.e. $\CH^*(X, n) = H_n(z^*(X,-))$.
Higher Chow groups with coefficients in a ring $R = \zz/m$ or $\zz$ are defined similarly by $\CH^*(X, n, R) = H_n(z^*(X,-) \otimes R)$.
Some properties of higher Chow groups are the following.
 \begin{enumerate}
 \item Weight zero: we have $\CH^*(X, 0, R) = A^*(X) \otimes R$.
 \item \label{func} Functoriality: there are proper push forwards and and flat pull backs.
 \item \label{loc} Localization long exact sequence: If $Y \subset X$ is a closed subscheme of pure codimension $d$, then there is a long exact sequence
 \begin{align*}
 \ldots &\rightarrow \CH^{*-d}(Y, 1, R) \rightarrow \CH^*(X,1,R) \rightarrow \CH^*(X-Y,1,R) \\
 &\rightarrow \CH^{*-d}(Y,0,R) \rightarrow \CH^*(X,0,R) \rightarrow \CH^*(X-Y,0,R).
 \end{align*}
 \item Homotopy: $\CH^*(X \times \mathbb{A}^m, n, R) \cong \CH^*(X, n, R)$. By \eqref{func} and \eqref{loc} it follows that if $\widetilde{X} \rightarrow X$ is any affine bundle, then $\CH^*(X, n, R) \cong \CH^*(\tilde{X},n, R)$.  \label{htopy}
 \end{enumerate}
 
Edidin and Graham \cite{EG} extend the notion of higher Chow groups to quotients $[X/G]$ by defining them to be higher Chow groups of suitable models: $\CH^*([X/G], n, R) := \CH^*(X \times_G U, n, R)$ where $U$ is an open subset of a representation of $G$ whose complement has sufficiently high codimension and $G$ acts freely on $U$. This is well-defined by the homotopy property, and Edidin-Graham obtain a localization long exact sequence for the corresponding quotients in \eqref{loc} when $Y$ is $G$-equivariant.

Over an algebraically closed field of characteristic zero, the higher Chow groups of a point with torsion coefficients are known:
\begin{equation}
\CH^i(pt, n, \zz/\ell) = \begin{cases} \zz/\ell &\text{if $n = 2i$} \\ 0 & \text{otherwise.} \end{cases}
\end{equation}
This follows from \cite[Corollary 4.3]{Suslin}, which relates higher Chow groups to certain \'etale cohomology groups, though this special case was likely known earlier.
Using the long exact sequence and the homotopy property, it follows that over such a field, $\CH^*(X, 1; \zz/\ell) = 0$ for any $X$ admitting an affine stratification.
In particular, since $\BGL_{r}$ is modeled by Grassmannians $G(r, N)$ (see Example \ref{bglr}), we have
\begin{equation} \label{hch}
\CH^*(\BGL_{r_1} \times \cdots \times \BGL_{r_s}, 1; \zz/\ell) = 0
\end{equation}
over an algebraically closed field of characteristic zero.

\section{Construction of the moduli stack} \label{con}

Given some rank $r \geq 0$ and degree $d \in \zz$, we define the moduli stack $\B_{r,d}$ of vector bundles of rank $r$ and degree $d$ on $\pp^1$ bundles by
\[\B_{r,d}(T) = \left\{(W,E): \parbox{22em}{$W$ a rank $2$ vector bundle on $T$ \\
$E$ a rank $r$, relative degree $d$ vector bundle on $\pp W$}\right\}.\]
An arrow $(W, E) \to (W', E')$ is the data of an isomorphism $\phi: W \to W'$ --- which induces an isomorphism $\pp \phi: \pp W \to \pp W'$ --- and an isomorphism $\psi: E \to (\pp \phi)^* E'$.
Given a vector bundle $E$ on a $\pp^1$ bundle $\pp W \rightarrow T$, we write $E(m) := E \otimes \O_{\pp W}(1)^{\otimes m}$.
There are equivalences $\B_{r, d} \cong \B_{r,d+mr}$ sending $(W, E) \mapsto (W, E(m))$. Thus, it would suffice to study $\B_{r,\ell}$ for $0 \leq \ell < r$. Throughout, $d = \ell + mr$ will be an arbitrary degree in the residue class of $\ell$ modulo $r$.

The stack $\B_{r,\ell}$ is a union of open substacks
\[\B_{r,\ell} = \bigcup_{m=0}^\infty \U_{m} \qquad \text{with} \qquad \U_0 \subset \U_1 \subset \U_2 \subset \cdots.\]
where
$\U_m := \U_{m, r, \ell}$ is defined by
\begin{align*}
\U_{m, r, \ell}(T) &= \{(W, E) \in \B_{r,\ell}(T) : \text{$E(m)$ is globally generated on each fiber over $T$}\} \\
&=\{(W, E) \in \B_{r,\ell}(T) : \text{$R^1\pi_*E(m-1) = 0$ for $\pi: \pp W \rightarrow T$ projection}\}.
\end{align*}
The stack $\U_{0, r, \ell}$ is equal to the stack of globally generated rank $r$, degree $\ell$ vector bundles on $\pp^1$ bundles, which we shall denote $\V_{r,\ell}$. Via the twist by $\O_{\pp W}(m)$, there are isomorphisms $\U_{m, r, \ell} \cong \V_{r, \ell + mr}$ for each $m$. 

\subsection{Splitting loci} \label{sps}
Every vector bundle on $\pp^1$ splits as a direct sum of line bundles, say $E = \O(e_1) \oplus \cdots \oplus \O(e_r)$ for $e_1 \leq \cdots \leq e_r$. We call $\vec{e} = (e_1, \ldots, e_r)$ the \emph{splitting type} of $E$, and abbreviate the corresponding sum of line bundles by
\[\O(\vec{e}) := \O(e_1) \oplus \cdots \oplus \O(e_r).\]
The moduli space $\B_{r,\ell}$ admits a stratification by the \emph{splitting loci} of the universal vector bundle.
These are the locally closed substacks $\Sigma_{\vec{e}} \subset \B_{r,\ell}$ which parametrize families of vector bundles with splitting type $\vec{e}$ on each fiber of a $\pp^1$ bundle.

Suppose that $\O(\vec{e}) = \bigoplus_{i=1}^s \O(d_i)^{\oplus r_i}$ with $d_1 < \ldots < d_s$ (so the $d_i$ are the \emph{distinct} degrees in $\vec{e}$ and the $r_i$ their multiplicities).
Let us identify $\Aut(\O(\vec{e}))$ with block upper triangular matrices whose entries in the $i, j$ block are homogeneous polynomials on $\pp^1$ of degree $d_j - d_i$. 
The block diagonal matrices correspond to the subgroup $\prod_{i=1}^s \GL_{r_i} \hookrightarrow \Aut(\O(\vec{e}))$.
The group $\GL_2$ acts on the off-diagonal blocks via change of coordinates on $\pp^1$. 
The data of a vector bundle $E$ on a $\pp^1$ bundle $\pp W \rightarrow T$ whose restriction to each fiber has splitting type $\vec{e}$
is the same as a principal bundle for the semi-direct product $H_{\vec{e}} := \Aut(\O(\vec{e})) \ltimes \GL_2$.
In other words, $\Sigma_{\vec{e}}$ is equivalent to the classifying stack $B H_{\vec{e}}$. From this, we see that
\begin{equation} \label{codimeq}
\codim \Sigma_{\vec{e}} = h^1(\pp^1, End(\O(\vec{e})) = \sum_{i, j} \max\{0, e_i - e_j-1\} =: u(\vec{e}).
\end{equation}

The complement of $\U_{m,r,\ell} \subset \B_{r,\ell}$ 
is the union of splitting loci $\Sigma_{\vec{e}}$ with $e_1 < -m$. In particular, using \eqref{codimeq}, one sees that this union of splitting loci has codimension $\ell + mr + 1$ (see also \cite[Section 5]{L}). In particular, as $m$ increases, the codimension of the complement of $\U_m$ goes to infinity.
The key to understanding the intersection theory of $\B_{r, \ell}$ is therefore to understand each $\U_{m,r,\ell}$, which is equivalent to the stack of globally generated vector bundles $\V_{r, d}$ for $d = \ell + mr$.

\subsection{Globally generated vector bundles}
The stack $\V_{r,d}$ of rank $r$, degree $d$ globally generated vector bundles on $\pp^1$ was constructed by Bolognesi--Vistoli in \cite{BV}. We briefly motivate and review their construction.
 If $E$ is a globally generated vector bundle on $\pp^1$, then there is a surjection $H^0(E) \otimes \O_{\pp^1} \to E$. Since $h^0(E) = r + d$, the kernel of this map is rank $d$, degree $-d$. Furthermore, the kernel has no global sections, so it must be $\O_{\pp^1}(-1)^{\oplus d}$. That is, given a globally generated $E$, it sits naturally in a sequence
\begin{equation} \label{ptss}
\begin{tikzcd}
0 \arrow{r} & \O_{\pp^1}(-1)^{\oplus d} \arrow{r}{\psi} & \O_{\pp^1}^{\oplus (r + d)} \arrow{r} & E \arrow{r} &0.
\end{tikzcd}
\end{equation}

Let $M_{r,d} := \mathrm{Hom}( \O_{\pp^1}(-1)^{\oplus d}, \O_{\pp^1}^{\oplus (r + d)}) $ be the space of $d \times (r+d)$ matrices of linear forms on $\pp^1$.
The sequence \eqref{ptss} determines an element $\psi \in M_{r,d}$ which is well-defined up to the choice of framings of the source and target. Moreover, $\psi$ lies in the open subvariety $\Omega_{r,d} \subset M_{r,d}$ of matrices of linear forms having full rank $d$ at each point on $\pp^1$. 
Let $\GL_d$ act on $M_{r,d}$ by left multiplication and $\GL_{r+d}$ act by right multiplication. In addition, let $\GL_2$ act on $M_{r,d}$ by change of coordinates on the entries, which live in the two-dimensional vector space $H^0(\pp^1, \O_{\pp^1}(1))$. These three actions commute, so we obtain an action of $\GL_d \times \GL_{d + r} \times \GL_2$ on $M_{r, d}$. The locus $\Omega_{r,d} \subset M_{r,d}$ is preserved by this action and thus inherits an action of $\GL_d \times \GL_{d + r} \times \GL_2$.

\begin{theorem}[Bolognesi-Vistoli \cite{BV}, Theorem 4.4] \label{bvthm}
There is an isomorphism of fibered categories
$\V_{r,d}\cong[\Omega_{r,d}/\GL_d \times \GL_{r+d} \times \GL_2].$
\end{theorem}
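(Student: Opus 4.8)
The plan is to prove Theorem~\ref{bvthm} by constructing functors in both directions and checking they are mutually quasi-inverse. First I would unwind the right-hand side: an object of $[\Omega_{r,d}/\GL_d \times \GL_{r+d} \times \GL_2]$ over $T$ is a principal $(\GL_d \times \GL_{r+d} \times \GL_2)$-bundle $P \to T$ together with an equivariant map $P \to \Omega_{r,d}$. Equivalently, by descent, it is the data of a rank $d$ bundle $A$ on $T$, a rank $r+d$ bundle $B$ on $T$, a rank $2$ bundle $W$ on $T$, and a map $\phi \colon A \boxtimes \O_{\pp W}(-1) \to B \boxtimes \O_{\pp W}$ of sheaves on $\pp W$ that is fiberwise injective with locally free cokernel of rank $r$ --- this last condition being exactly the translation of ``lands in $\Omega_{r,d}$'', i.e., full rank $d$ at every point of every fiber. (Here I am using that $\mathrm{Hom}(\O_{\pp^1}(-1)^{\oplus d}, \O_{\pp^1}^{\oplus(r+d)}) = H^0(\pp^1, \O(1))^{\oplus d(r+d)}$ carries its natural $\GL_2$-action, so the associated bundle to $W$ is $\mathrm{Hom}(A \boxtimes \O_{\pp W}(-1), B \boxtimes \O_{\pp W})$ pushed to $T$.) The functor to $\V_{r,d}$ sends this to $(W, \coker \phi)$; one checks $\coker \phi$ is a rank $r$, relative degree $d$ bundle on $\pp W$ that is globally generated on each fiber, since restricting to a fiber recovers a sequence of the shape \eqref{ptss}.

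For the functor in the other direction, given $(W, E) \in \V_{r,d}(T)$, I would set $B := \pi_* E$, where $\pi \colon \pp W \to T$; since $E$ is globally generated of degree $d$ on each fiber, $\pi_* E$ is locally free of rank $r+d$ and its formation commutes with base change, and the counit $\pi^* \pi_* E \to E$ is surjective. Let $K$ be its kernel, a rank $d$ bundle on $\pp W$ that restricts to $\O_{\pp^1}(-1)^{\oplus d}$ on each fiber (it is fiberwise $\O(-1)^{\oplus d}$ because it has no sections and the right degree/rank, using that $H^0$ of the restriction vanishes as in the motivating discussion). Then $K(1) = K \otimes \O_{\pp W}(1)$ is fiberwise trivial, so $A := \pi_*(K(1))$ is locally free of rank $d$ with $\pi^* A \to K(1)$ an isomorphism, giving $K \cong \pi^* A \otimes \O_{\pp W}(-1) = A \boxtimes \O_{\pp W}(-1)$. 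The inclusion $K \hookrightarrow \pi^* B$ then becomes the desired $\phi \colon A \boxtimes \O_{\pp W}(-1) \to B \boxtimes \O_{\pp W}$, fiberwise injective with locally free cokernel $E$, hence landing in $\Omega_{r,d}$; this is the sought-after object of $[\Omega_{r,d}/\GL_d \times \GL_{r+d} \times \GL_2](T)$. One must also track morphisms: an isomorphism $(W,E) \to (W',E')$ induces isomorphisms on $\pi_* E$, on the kernel, and on $\pi_*(K(1))$, hence an isomorphism of the associated descent data, and conversely.

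Finally I would verify the two composites are naturally isomorphic to the respective identities. Starting from $(W,E)$, forming $(A, B, W, \phi)$ and taking $\coker \phi$ returns $E$ canonically, and $W$ is unchanged. Starting from $(A,B,W,\phi)$ with $E = \coker \phi$, one must produce canonical isomorphisms $\pi_* E \cong B$ and $\pi_*(\ker(\pi^* B \to E) \otimes \O(1)) \cong A$ compatible with $\phi$: applying $\pi_*$ to $0 \to A \boxtimes \O_{\pp W}(-1) \to B \boxtimes \O_{\pp W} \to E \to 0$ and using $\pi_* \O_{\pp W}(-1) = 0 = R^1\pi_* \O_{\pp W}(-1)$ gives $\pi_* E \cong B$, and recovering $A$ uses $\pi_* \O_{\pp W} = \O_T$ together with the twist-by-$\O(1)$ computation above. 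These are routine diagram chases once the base-change statements are in place. The main obstacle is purely bookkeeping: making sure every sheaf-theoretic construction (pushforward, kernel, the twist by $\O_{\pp W}(1)$, the cokernel) genuinely commutes with arbitrary base change $T' \to T$, so that the functors are well-defined on the fibered-category level and not just on $T$-points; the cohomology and base change theorem applied to $E$, $K$, $K(1)$, $\O_{\pp W}(-1)$ and $\O_{\pp W}$ on the fibers $\pp^1$ supplies exactly what is needed, since all the relevant higher cohomology groups on $\pp^1$ vanish.
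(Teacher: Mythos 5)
Your proposal is correct and matches the intended argument: the paper does not prove this statement itself (it is quoted from Bolognesi--Vistoli \cite{BV}), but the construction you carry out --- identifying objects of the quotient stack with triples $(A,B,W)$ plus a fiberwise full-rank map $\pi^*A\otimes\O_{\pp W}(-1)\to\pi^*B$, and inverting via $B=\pi_*E$, $K=\ker(\pi^*\pi_*E\to E)$, $A=\pi_*(K(1))$ --- is exactly the one sketched in the surrounding discussion around the sequence \eqref{ptss} and carried out in \cite{BV}. The cohomology-and-base-change points you flag are indeed the only substantive verifications needed.
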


In other words, Theorem \ref{bvthm} says $\V_{r,d}$ is an open substack of a vector bundle over $\BGL_d \times \BGL_{r+d} \times \BGL_2$. 
In particular, by the homotopy and excision properties, we have a surjection 
\begin{equation} \label{surj}
A^*(\BGL_d \times \BGL_{r+d} \times \BGL_2) \twoheadrightarrow A^*(\V_{r,d}).
\end{equation}
Let $\W$ denote the universal rank $2$ vector bundle on $\V_{r,d}$ (pulled back from the $\BGL_2$ factor), and let $\E^{gg} := \E_{r,d}^{gg}$ be the universal globally generated rank $r$, degree $d$ vector bundle on $\pi: \pp \W \to \V_{r,d}$.
Let $T_d$ and $T_{r+d}$ denote the universal vector bundles on $\BGL_d$ and $\BGL_{r+d}$. We wish to identify their pullbacks to $\V_{r,d}$ in terms of $\E^{gg}$.

\begin{Lemma} \label{genslem}
Let $\gamma: \V_{r,d} \rightarrow \BGL_d \times \BGL_{r+d}$ be the natural map. 
We have
\begin{equation} \label{id}
\gamma^* T_d = (\pi_* \E^{gg}(-1)) \otimes \det \W^\vee \qquad \text{and} \qquad \gamma^*T_{r+d} = \pi_* \E^{gg}.
\end{equation}
In particular, $A^*(\V_{r,d})$ is generated by the Chern classes of the three vector bundles $\W$, $\pi_*\E^{gg}(-1)$ and $\pi_* \E^{gg}$.
\end{Lemma}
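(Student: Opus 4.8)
The plan is to unwind the Bolognesi–Vistoli presentation from Theorem~\ref{bvthm} and read off what the tautological bundles $T_d$ and $T_{r+d}$ pull back to. Recall that over $\Omega_{r,d} \subset M_{r,d}$ there is a tautological exact sequence of sheaves on $\pp^1 \times \Omega_{r,d}$ obtained by letting $\psi$ vary; after descending along the quotient by $\GL_d \times \GL_{r+d} \times \GL_2$ this becomes a sequence on $\pp\W$:
\begin{equation} \label{univseq}
\begin{tikzcd}
0 \arrow{r} & \gamma^*T_d \otimes \O_{\pp\W}(-1)\otimes(?) \arrow{r}{\Psi} & \gamma^*T_{r+d} \otimes(?) \arrow{r} & \E^{gg} \arrow{r} & 0,
\end{tikzcd}
\end{equation}
where the source and target are the universal "framing" bundles twisted appropriately. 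The first thing I would do is pin down the twists precisely: on the $\GL_d$ factor the framing bundle $\O_{\pp^1}(-1)^{\oplus d}$ becomes $\gamma^*T_d$ tensored with the line bundle $\O_{\pp\W}(1)$ twisted by a power of $\det\W$, since $\O_{\pp W}(1)$ on a $\pp^1$-bundle is only defined up to twist by a line bundle from the base and one must fix the normalization used in the construction. The cleanest route is to observe that the $\GL_2$-action on $M_{r,d}$ is through $H^0(\pp^1,\O(1))$, which identifies the relevant line bundle on $\pp\W$ with $\O_{\pp\W}(1)$ in the convention where $\pi_*\O_{\pp\W}(1) = \W^\vee$ (or $\W$, depending on sign conventions); getting this bookkeeping right is where the $\det\W^\vee$ in \eqref{id} comes from.

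Next, from \eqref{univseq} I would push forward along $\pi$. Since $\E^{gg}$ is globally generated of the expected cohomological type on each fiber, $R^1\pi_* \E^{gg} = 0$ and $\pi_*$ applied to the sequence is still exact on the right; moreover $\pi_*$ of the first term vanishes (it is a twist of $\gamma^*T_d$ by $\O_{\pp\W}(-1)$, which has no pushforward), and $R^1\pi_*$ of the middle term vanishes, so the connecting map gives
\[
\gamma^*T_{r+d} \otimes (\text{base twist}) \;\xrightarrow{\ \cong\ }\; \pi_*\E^{gg},
\]
and comparing with the known framing $H^0(\pp^1,\O)^{\oplus(r+d)}$ (which carries the trivial twist) yields $\gamma^*T_{r+d} = \pi_*\E^{gg}$. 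For $\gamma^*T_d$, I would instead twist \eqref{univseq} by $\O_{\pp\W}(-1)$ before pushing forward: then the middle term acquires $\O_{\pp\W}(-1)$ and dies under $\pi_*$, while $R^1\pi_*$ of $\E^{gg}(-1)$ need not vanish, so the long exact sequence gives $\gamma^*T_d \otimes (\text{twist}) \cong \pi_*(\text{first term}\otimes\O(-1))$... the more direct statement is that applying $\pi_*$ to the first term of \eqref{univseq} twisted suitably recovers $\gamma^*T_d$ up to the determinantal twist, and matching against $\pi_*\E^{gg}(-1)$ via the same sequence produces $\gamma^*T_d = \pi_*\E^{gg}(-1)\otimes\det\W^\vee$.

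Finally, the "in particular" clause is immediate once \eqref{id} is established: the surjection \eqref{surj} shows $A^*(\V_{r,d})$ is generated by the Chern classes of $\W$, $\gamma^*T_d$, and $\gamma^*T_{r+d}$; substituting \eqref{id} and noting that $\det\W^\vee$ is already a polynomial in $c_1(\W), c_2(\W)$ (so tensoring by it does not enlarge the subring generated by the Chern classes of $\W$ and $\pi_*\E^{gg}(-1)$), we conclude $A^*(\V_{r,d})$ is generated by the Chern classes of $\W$, $\pi_*\E^{gg}(-1)$, and $\pi_*\E^{gg}$. The main obstacle I anticipate is entirely bookkeeping: fixing a consistent normalization of $\O_{\pp\W}(1)$ and tracking how the three $\GL$-factors act on the framing bundles so that the determinantal twist $\det\W^\vee$ comes out with the correct sign and power. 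The homological input (vanishing of the relevant $\pi_*$ and $R^1\pi_*$, exactness of the pushed-forward sequence) is routine given that we are on $\V_{r,d}$ where global generation is built in.
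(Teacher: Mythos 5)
Your overall strategy is the same as the paper's: take the universal presentation sequence $0 \to (\pi^*\gamma^*T_d)(-1) \to \pi^*\gamma^*T_{r+d} \to \E^{gg} \to 0$ on $\pp\W$ coming from the Bolognesi--Vistoli quotient, push forward to get $\gamma^*T_{r+d} \cong \pi_*\E^{gg}$, and twist by $\O_{\pp\W}(-1)$ before pushing forward to capture $\gamma^*T_d$. The ``in particular'' clause is handled exactly as in the paper.

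However, there is a genuine gap at the one nontrivial computation: the origin of the $\det\W^\vee$ twist. You attribute it to an ambiguity in normalizing $\O_{\pp\W}(1)$ and leave the universal sequence decorated with undetermined twists ``$(?)$''; in fact the sequence descends with \emph{no} base twist on either the source or the target, and the determinant appears for a different reason. After tensoring by $\O_{\pp\W}(-1)$ and pushing forward, the long exact sequence identifies $\pi_*\E^{gg}(-1)$ with $R^1\pi_*\bigl((\pi^*\gamma^*T_d)(-2)\bigr) \cong \gamma^*T_d \otimes R^1\pi_*\O_{\pp\W}(-2)$ by the projection formula, and the whole content of the twist is the identification $R^1\pi_*\O_{\pp\W}(-2) \cong \det\W$, which follows from relative Serre duality using $\omega_\pi = \O_{\pp\W}(-2)\otimes\det\W^\vee$. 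Without this step you cannot ``match against $\pi_*\E^{gg}(-1)$'' to produce the stated formula, so the proof is incomplete as written. Two smaller points: the isomorphism $\gamma^*T_{r+d}\cong\pi_*\E^{gg}$ is induced by the pushforward of the surjection in the sequence (using that $\pi_*$ and $R^1\pi_*$ of $(\pi^*\gamma^*T_d)(-1)$ both vanish fiberwise), not by a connecting map; and your remark that $R^1\pi_*\E^{gg}(-1)$ ``need not vanish'' is false on $\V_{r,d}$ --- global generation is equivalent to exactly this vanishing --- though this does not affect the argument, since the relevant boundary map lands in $R^1\pi_*$ of the first term.
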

\begin{proof}
By the construction of $\V_{r,d}$ as a quotient of $\Omega_{r,d} \subset M_{r,d}$, the universal $\pp^1$-bundle $\pi: \pp \W \to \V_{r,d}$ is equipped with an exact sequence of vector bundles
\begin{equation} \label{eseq}
0 \rightarrow (\pi^*\gamma^*T_d)(-1) \rightarrow \pi^*\gamma^*T_{r+d} \rightarrow \E^{gg} \rightarrow 0.
\end{equation}
Pushing forward \eqref{eseq} by $\pi$ induces an isomorphism 
\[\gamma^*T_{r+d} \cong \pi_*\pi^* \gamma^* T_{r+d} \xrightarrow{\sim} \pi_* \E^{gg}.\]
On the other hand, tensoring \eqref{eseq} with $\O_{\pp \W}(-1)$ and pushing forward by $\pi$ induces an isomorphism
\begin{equation} \label{next}
\pi_*\E^{gg}(-1) \xrightarrow{\sim} R^1\pi_*( (\pi^* \gamma^* T_d)(-2)) \cong \gamma^* T_d \otimes R^1 \pi_*  \O_{\pp \W}(-2) 
\end{equation}
Noting that the relative dualizing sheaf of $\pi$ is $\omega_{\pi} = \O_{\pp \W}(-2) \otimes \det \W^\vee$, Serre duality provides an isomorphism of the righthand term in \eqref{next} with $\pi^*\gamma^*T_d \otimes \det \W$. Having identified the tautological vector bundles, \eqref{surj} now establishes the claim about generators of $A^*(\V_{r,d})$.
\end{proof}

\begin{remark}
Lemma \ref{genslem} provides a quick proof of the existence half of \cite[Theorem 1.2]{L}: Pulling back the classes of closures of the universal splitting loci $\overline{\Sigma}_{\vec{e}}$ on $\B_{r,d}$, it follows that
when the splitting loci of a vector bundle $E$ on a $\pp^1$ bundle $\pp W \rightarrow B$ have the expected codimension, their classes in the Chow ring of $B$ are given by a universal formula in terms of the Chern classes of the rank $2$ bundle $\pi_*\O_{\pp W}(1)$ and the bundles $\pi_*E(i)$ for suitable $i$. 
This observation does not, however, give an indication of how to find these formulas, as done in \cite[Section 6]{L}.
\end{remark}

Now, let $\E := \E_{r,\ell}$ denote the universal rank $r$, degree $\ell$ vector bundle on $\pp \W \to \B_{r,\ell}$. The restriction of $\E$ to $\U_{0, r,\ell} \subset \B_{r,\ell}$ is  just $\E|_{\U_{0,r,\ell}} = \E^{gg}_{r,\ell}$.  More generally, we have each $\U_{m, r,\ell} \cong \V_{r,\ell+mr}$, and via this identification, $\E|_{\U_{m, r, \ell}} = \E_{r, \ell+mr}^{gg}(-m)$, equivalently $\E(m)|_{\U_{m, r, \ell}} = \E^{gg}_{r,\ell+mr}$. This establishes the following.

\begin{Lemma} \label{int-gen}
The Chow ring $A^*(\U_m)$ is generated over $\zz$ by the Chern classes of $\W$ and the vector bundles $\pi_*\E(m-1)$ and $\pi_*\E(m)$ on $\U_m$. Thus, $A^*(\B_{r,\ell})$ is generated by the Chern classes of $\W$ and $\pi_*\E(i)$ for $i = 0, 1, 2, \ldots$
\end{Lemma}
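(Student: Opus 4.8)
The plan is to deduce both assertions from Lemma~\ref{genslem}, using the isomorphisms $\U_{m,r,\ell}\cong\V_{r,\ell+mr}$ for the first and the excision property from Section~\ref{2} for the second.

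For $A^*(\U_m)$, I would transport the generators of $A^*(\V_{r,\ell+mr})$ found in Lemma~\ref{genslem} through the isomorphism $\U_m=\U_{m,r,\ell}\cong\V_{r,\ell+mr}$. This isomorphism sends $(W,E)\mapsto(W,E(m))$, so it is the identity on the underlying $\pp^1$-bundles: it identifies $\pi\colon\pp\W\to\U_m$ with the universal $\pp^1$-bundle of $\V_{r,\ell+mr}$, identifies $\W$ with $\W$, and — as recorded just before the Lemma — identifies $\E(m)|_{\U_m}$ with $\E^{gg}_{r,\ell+mr}$, hence $\E(m-1)|_{\U_m}$ with $\E^{gg}_{r,\ell+mr}(-1)$. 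Applying $\pi_*$ identifies $\pi_*\E(m-1)$ and $\pi_*\E(m)$ on $\U_m$ with $\pi_*\E^{gg}_{r,\ell+mr}(-1)$ and $\pi_*\E^{gg}_{r,\ell+mr}$ (all locally free, as $R^1\pi_*$ vanishes by definition of $\U_m$). Lemma~\ref{genslem} then gives at once that $A^*(\U_m)$ is generated over $\zz$ by the Chern classes of $\W$, $\pi_*\E(m-1)$, and $\pi_*\E(m)$.

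For $A^*(\B_{r,\ell})$, I would argue by exhaustion. First note that $\B_{r,\ell}$ is smooth — it is covered by the $\U_m$, each open in a vector bundle over a product of classifying stacks $\BGL$ — so the coherent pushforwards $\pi_*\E(i)$ on $\B_{r,\ell}$ have well-defined Chern classes. Given $\alpha\in A^i(\B_{r,\ell})$, pick $m\ge1$ large enough that the complement of $\U_m$ in $\B_{r,\ell}$ has codimension $>i$ (possible since these codimensions, equal to $\ell+mr+1$, go to infinity); then excision makes $A^i(\B_{r,\ell})\to A^i(\U_m)$ an isomorphism. By flat base change along the open immersion $\U_m\hookrightarrow\B_{r,\ell}$, one has $(\pi_*\E(j))|_{\U_m}\cong\pi_*(\E(j)|_{\U_m})$, so restricting to $\U_m$ sends the global classes $c_k(\W),\,c_k(\pi_*\E(m-1)),\,c_k(\pi_*\E(m))$ exactly to the generators of $A^*(\U_m)$ from the previous step. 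Writing $\alpha|_{\U_m}$ as a $\zz$-polynomial $P$ in those, the class $P\bigl(c(\W),c(\pi_*\E(m-1)),c(\pi_*\E(m))\bigr)$ has the same restriction to $\U_m$ as $\alpha$, hence equals $\alpha$ by injectivity. As $m\ge1$, only the bundles $\pi_*\E(j)$ with $j\ge0$ occur (the case $i=0$ being trivial), which is the asserted list of generators.

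I do not anticipate a real obstacle beyond Lemma~\ref{genslem} itself; the delicate points are purely bookkeeping — correctly matching each bundle with its counterpart under $\U_m\cong\V_{r,\ell+mr}$, and handling that $\pi_*\E(i)$ is only coherent (not locally free) on all of $\B_{r,\ell}$, which forces the passage to the opens $\U_m$ where it becomes locally free and flat base change applies, while noting that enlarging $m$ keeps the relevant twists in the range $i\ge0$.
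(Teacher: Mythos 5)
Your proposal is correct and follows essentially the same route as the paper: the paper deduces the first claim directly from Lemma \ref{genslem} via the identification $\E(m)|_{\U_{m,r,\ell}} = \E^{gg}_{r,\ell+mr}$ (so $\pi_*\E(m-1)$ and $\pi_*\E(m)$ match $\pi_*\E^{gg}(-1)$ and $\pi_*\E^{gg}$), and the second from the fact that $\codim \U_m^c = \ell+mr+1 \to \infty$ together with excision. Your added remarks on flat base change and on restricting to $m\ge 1$ so that only nonnegative twists appear are harmless bookkeeping that the paper leaves implicit.
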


 We shall later describe the Chow ring as a subring of a finitely generated $\qq$-algebra, which gives rise to an implicit description of the relations among these generators.

\section{The rational Chow ring} \label{chow}
The rational Chow ring of $\B_{r,\ell}$ can be described with fewer generators than the integral generators of Lemma \ref{int-gen}.
Let $w_1 = c_1(\W)$, $w_2 = c_2(\W)$ and $z = c_1(\O_{\pp \W}(1))$.
 The Chern classes of $\mathcal{E}$ can be written as $c_i(\mathcal{E}) = \pi^*(a_i) + \pi^*(a_i')z$ for unique $a_i \in A^i(\B_{r,\ell})$ and $a_i' \in A^{i-1}(\B_{r,\ell})$. (Note that $a_1' = \ell$.) By Grothendieck-Riemann-Roch, the Chern classes of the vector bundle $\pi_*\E(m)$ on $\U_m$ are expressible in terms of the $a_i$ and $a_i'$ for any $m$, so these classes are generators for the rational Chow ring of each $\U_m$ and therefore of $\B_{r,\ell}$. The main result of this section will be that there are no relations among these generators on $\B_{r,\ell}$. We first consider relations on an open $\U_m \cong \V_{r,d}$.
 
\begin{theorem} \label{ratchow}
The ring $A_{\qq}^*(\V_{r,d})$ is a quotient of $\qq[w_1, w_2, a_1, \ldots, a_r, a_2', \ldots, a_r']$ with all relations in degrees $d+1$ and higher. 
\end{theorem}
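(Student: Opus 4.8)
The plan is to use the description $\V_{r,d} \cong [\Omega_{r,d}/\GL_d \times \GL_{r+d} \times \GL_2]$ from Theorem \ref{bvthm} together with the surjection \eqref{surj}. First I would note that, since $\Omega_{r,d}$ is an open subset of the affine space $M_{r,d}$ whose complement $M_{r,d} \setminus \Omega_{r,d}$ has a computable codimension, the excision property gives that $A^i(\V_{r,d}) = A^i(\BGL_d \times \BGL_{r+d} \times \BGL_2)$ for all $i$ strictly below that codimension, and a surjection in all degrees. So the key numerical input is: what is $\codim(M_{r,d} \setminus \Omega_{r,d})$ inside $M_{r,d}$? The complement consists of matrices of linear forms that drop rank at some point of $\pp^1$; a standard incidence/dimension count (fixing a point $p \in \pp^1$, the locus of $d \times (r+d)$ matrices of rank $\le d-1$ in the fiber has codimension $(r+d)-(d-1) = r+1$, and sweeping over the $1$-dimensional $\pp^1$ gives expected codimension $r+1-1 = r$) — but one must be careful, since we are working with matrices of \emph{linear forms}, not constant matrices, and the degeneracy condition is that the evaluated matrix drops rank. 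I would compute this codimension and expect the answer to be $d+1$; this is consistent with the earlier claim in Subsection \ref{sps} that the complement of $\U_{m,r,\ell}$ in $\B_{r,\ell}$ has codimension $\ell + mr + 1 = d+1$, since $\V_{r,d} \cong \U_{m,r,\ell}$ and the complement of $\U_m$ inside $\B_{r,\ell}$ matches (under this identification) the locus where $E$ fails to be globally generated, i.e. where $\psi$ fails to have full rank everywhere.

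Granting that $\codim(M_{r,d}\setminus\Omega_{r,d}) = d+1$, excision gives $A^i(\V_{r,d}) \cong A^i(\BGL_d \times \BGL_{r+d}\times\BGL_2)$ for $i \le d$, and a surjection $A^*(\BGL_d \times \BGL_{r+d}\times\BGL_2) \twoheadrightarrow A^*(\V_{r,d})$ in all degrees. Now $A^*(\BGL_d\times\BGL_{r+d}\times\BGL_2) = \zz[c_1(T_d),\dots,c_d(T_d), c_1(T_{r+d}),\dots,c_{r+d}(T_{r+d}), w_1, w_2]$ by Example \ref{bglr}. Rationally, I would change generators: by Lemma \ref{genslem}, $\gamma^*T_{r+d} = \pi_*\E^{gg}$ and $\gamma^*T_d = (\pi_*\E^{gg}(-1))\otimes\det\W^\vee$, and by Grothendieck--Riemann--Roch the Chern classes of $\pi_*\E^{gg}$ and $\pi_*\E^{gg}(-1)$ are polynomials (with $\qq$-coefficients) in $w_1, w_2$ and the $a_i, a_i'$, and conversely — since pushing forward $\E(m)$ and $\E(m-1)$ recovers enough data, or more directly since GRR expresses $\ch$ of the pushforwards in terms of the $a_i, a_i'$ in an invertible way degree by degree — the $a_i, a_i'$ (for $i \ge 2$; recall $a_1 = c_1(\E)$ is already there and $a_1' = \ell$ is a constant) together with $w_1, w_2$ generate the same rational subalgebra as the Chern classes of $T_d, T_{r+d}, \W$. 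Hence $A_\qq^*(\V_{r,d})$ is a quotient of $\qq[w_1,w_2,a_1,\dots,a_r,a_2',\dots,a_r']$. Finally, to see all relations sit in degree $\ge d+1$: the relation ideal is the kernel of $\qq[w_1,w_2,a_1,\dots,a_r,a_2',\dots,a_r'] \twoheadrightarrow A_\qq^*(\V_{r,d})$; since this polynomial ring maps isomorphically (after the rational change of generators, which is triangular and hence an isomorphism of graded rings) onto $A_\qq^*(\BGL_d\times\BGL_{r+d}\times\BGL_2)$, which is a polynomial ring with no relations at all, and since the further quotient to $A_\qq^*(\V_{r,d})$ is an isomorphism in degrees $\le d$, the kernel is zero in degrees $\le d$, i.e. all relations are in degrees $d+1$ and higher.

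The main obstacle I anticipate is two-fold. The first is the codimension computation for $M_{r,d}\setminus\Omega_{r,d}$: one needs to verify that the non-full-rank locus really has codimension exactly $d+1$ (not less), which requires checking that the naive expected dimension is achieved — this is presumably where one invokes or re-derives the codimension statement from \ref{sps}/\cite{L}, possibly by directly analyzing the degeneracy locus $\{(\psi, p) : \mathrm{rank}\,\psi(p) < d\}$ with its projection to $\pp^1$ and to $M_{r,d}$. The second subtlety is confirming that the change of variables from $\{c_j(T_d), c_j(T_{r+d})\}$ to $\{a_i, a_i'\}$ is genuinely an isomorphism of graded $\qq$-algebras and not merely a surjection — i.e. that the GRR formulas are invertible over $\qq$. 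This should follow because $\ch(\pi_*\E^{gg})$ and $\ch(\pi_*\E^{gg}(-1))$, expanded via GRR, have the form ``$a_i, a_i'$ in degree $i$ plus decomposables/lower-degree-involving-$w$ terms,'' so the transition matrix is unipotent-triangular; but writing this cleanly (and handling the role of $w_1, w_2$ and the twist by $\det\W^\vee$) is the place where one must be careful. Everything else — excision, homotopy invariance, the structure of $A^*(\BGL_r)$ — is standard and quotable from Section \ref{2}.
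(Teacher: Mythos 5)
There is a genuine gap, and it sits exactly at the step you flagged as "the key numerical input." The codimension of $X = [\Omega_{r,d}^c/G]$ inside $M = [M_{r,d}/G]$ is $r$, not $d+1$ --- your own incidence count gives this ($(r+d)-(d-1) = r+1$ for the degeneracy at a fixed point of $\pp^1$, minus $1$ for sweeping over $\pp^1$), and the paper confirms it by exhibiting $\widetilde{X}$ as a vector bundle of corank $r+d$ over $\pp\W \times_B \pp(T_d)$, which has relative dimension $d+1$ over $B$. You then overrode your own computation by appealing to the codimension $\ell + mr + 1 = d+1$ of the complement of $\U_m$ in $\B_{r,\ell}$; but that is the complement in a \emph{different} ambient stack. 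Both $\B_{r,\ell}$ and $M$ contain $\V_{r,d}$ as an open substack, and the two complements have nothing to do with each other. Consequently, excision only gives you $A^i(\V_{r,d}) \cong A^i(\BGL_d \times \BGL_{r+d} \times \BGL_2)$ for $i < r$, and your claimed isomorphism up to degree $d$ is false: there really are relations on $A^*(B)$ starting in degree $r$ (the class $f_{0,0} = \sigma_*(\beta)$ in the paper's notation is one), since $A_\qq^*(B)$ is a polynomial ring on $2 + d + (r+d)$ generators while $A_\qq^*(\V_{r,d})$ turns out to be polynomial on only $2r+1$ generators in degrees $\le d$.

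What is missing is therefore the entire substance of the paper's argument: one must compute the image of the pushforward $A_\qq^{*-r}(X) \to A_\qq^*(M)$, which the paper does by resolving $X$ via the kernel bundle $\widetilde{X} \subset \sigma^* M$ over $\pp\W \times_B \pp(T_d)$, identifying the image as the ideal generated by $f_{i,j} = \sigma_*(\beta \cdot z^i \zeta^j)$ for $0 \le i \le 1$, $0 \le j \le d-1$, and then checking that the leading terms $f_{1,j-1} = -t_{j+r} + u_{j+r} + \cdots$ and $f_{0,j} = -(r+d)t_{j+r} + (d-j)u_{j+r} + \cdots$ form an invertible $2\times 2$ system. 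This shows the relations in degrees $r$ through $d$ merely \emph{eliminate} the redundant generators $t_n$ ($r \le n \le d$) and $u_m$ ($r+1 \le m \le d$) rather than imposing relations among $w_1, w_2, t_1, \ldots, t_{r-1}, u_1, \ldots, u_r$; only then does the dimension count against $w_1, w_2, a_1, \ldots, a_r, a_2', \ldots, a_r'$ close the argument. Your second concern --- that the passage from Chern classes of $T_d, T_{r+d}, \W$ to the $a_i, a_i'$ is a triangular, hence invertible, change of rational generators --- is sound and is essentially how the paper concludes, but it is the easy part; without the explicit control of the relation ideal the theorem does not follow.
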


\begin{proof}
Set $G := \GL_d \times \GL_{r+d} \times  \GL_2$.
We let $T_d, T_{r+d}$ and $\W$ denote the corresponding tautological bundles on $B := BG$. Let $t_i = c_i(T_{d})$ and $u_i = c_1(T_{r+d})$ and $w_i = c_i(\W)$ be their Chern classes, which freely generate $A^*(B)$.
Next, define $M:=[M_{r,d}/G]$, which is the total space of the vector bundle $\mathcal{H}om(T_d,T_{r+d}) \otimes \W^\vee$ over $B$. Theorem \ref{bvthm} says that $\V_{r,d}$ is an open substack of $M$. Let us write $X:=[\Omega_{r,d}^c/G]$ for its closed complement. Here, $\Omega_{r,d}^c \subset M_{r,d}$ is the space of matrices of linear forms that drop rank along some point on $\pp^1$. 
By excision, we have a right exact sequence
\begin{equation} \label{res}
A^{*-\mathrm{codim}X}(X) \rightarrow A^*(M)  \rightarrow A^*(\V_{r,d}) \rightarrow 0.
\end{equation}
We shall see soon that $X$ is irreducible of codimension $r$, which immediately implies there are no relations among generators of $A^*(M)$ restricted to $\V_{r,d}$ in degrees less than $r$. In what follows, we describe relations among the restrictions of these Chern classes in degrees $r$ up to $d$.

First we construct a space $\widetilde{X}$ whose total space maps properly to $M$ with image $X$.
Let $\pp(T_d)$ be the projectivization of the tautological rank $d$ bundle and let $\sigma: \pp \W \times_B \pp(T_d) \rightarrow B$ be the map to the base. On $\pp \W \times_B \pp(T_d)$ we have a surjection of vector bundles 
\[\sigma^*M = \sigma^*(\mathcal{H}om(T_d, T_{r+d}) \otimes \W^\vee) \rightarrow \O_{\pp(T_d)}(1) \otimes \sigma^*T_{r+d} \otimes \O_{\pp \W}(1),\]
corresponding to evaluation of the map along a one-dimensional subspace of the fiber of $T_d$. Let $\widetilde{X} \subset\sigma^*M$ denote the total space of the kernel vector bundle.
Informally,
\[\widetilde{X} = \{(p, \Lambda, \psi) : p \in \pp \W, \Lambda \subset (T_d)_{\pi(p)},  \psi \in M_{\pi(p)}, \Lambda \subset \ker \psi(p) \subset (T_d)_{\pi(p)}\} .\]

 We have a commutative diagram:
\begin{center}
\begin{tikzcd}
&\widetilde{X} \arrow{dr}[swap]{\rho''} \arrow{r}{\iota} &\sigma^*M\arrow{d}{\rho'} \arrow{r}{\sigma'} &M \arrow{d}{\rho} \\
&&\pp \W \times_B \pp(T_d) \arrow{r}{\sigma} & B,
\end{tikzcd}
\end{center}
where $\rho, \rho',$ and $\rho''$ are all vector bundle maps. 
By construction, $\sigma'(\iota(\widetilde{X})) = X$ and $\sigma' \circ \iota$ is projective, as desired. The map $\widetilde{X} \to X$ is also generically $1$-to-$1$. This establishes that
\[\dim X = \dim \widetilde{X} = \dim M + \dim (\pp \W \times_B \pp(T_d)) - \rank T_{k+r} = \dim M - r.\] 
Since $\sigma' \circ \iota$ is projective, the pushforward of rational Chow groups $A_\qq^*(\widetilde{X}) \to A_\qq^*(X)$ is surjective. (Take the preimage of any cycle; if the fibers are generically finite, the pushforward is some multiple of the original cycle. Otherwise, slice with enough copies of the hyperplane class to get a cycle mapping with generically finite fibers to the same image.) It follows that the image of $A_\qq^{*-r}(X) \rightarrow A_\qq^*(M)$ via pushforward of cycles is the same as the image of $(\sigma' \circ \iota)_*: A_{\qq}^{*-r}(\widetilde{X}) \rightarrow A_{\qq}^*(M)$.

The pullback maps $(\rho'')^*$, $(\rho')^*$ and $\rho^*$ all induce isomorphisms on Chow rings.
Let $(\rho')^*\beta$ be the fundamental class of $\widetilde{X}$ in the Chow ring of $\sigma^*M$. In fact, 
\[\beta := c_{r+d}(\O_{\pp(T_d)}(1) \otimes \sigma^*T_{r+d} \otimes  \O_{\pp \W}(1)).\]
We can write every class in $A^{*-r}(\widetilde{X})$ as $(\rho'')^* \alpha$ for a unique class $\alpha \in A^{*-r}(\pp \W \times_B \pp(T_d))$. 
Then the effect of $(\sigma' \circ \iota)_*$ is
\begin{equation} \label{effect}
\sigma'_* \iota_* (\rho'')^* \alpha = \sigma'_* \iota_* \iota^* (\rho')^* \alpha = \sigma'_*([\widetilde{X}] \cdot (\rho')^*\alpha) = \sigma'_*(\rho')^*(\beta \cdot \alpha) = \rho^*\sigma_*(\beta \cdot \alpha).
\end{equation}
Now, let $z = c_1(\O_{\pp \W}(1))$ and $\zeta = c_1(\O_{\pp(T_d)}(1))$.
By the projective bundle theorem,
\begin{equation} \label{ch}
A^*(\pp \W \times_B \pp(T_d)) = A^*(B)[z,\zeta]/(z^2 +w_1z+w_2, \zeta^{d} + \zeta^{d-1}t_1 + \ldots + \zeta t_{d-1} + t_{d}).
\end{equation}
Thus, each class $\alpha \in A^*(\pp \W \times_B \pp(T_d))$ is uniquely expressible as 
\begin{equation} \label{expressible}
\alpha = \sum_{i=0}^{d-1} (\sigma^*\gamma_i) \zeta^i + \sum_{i=1}^d (\sigma^*\gamma_i') z\zeta^{i-1}, 
\end{equation}
where $\gamma_i \in A^*(B)$ with $\deg \gamma_i = \deg \gamma_i' = \deg \alpha - i$. 

By \eqref{effect} and \eqref{expressible}, the image of $(\sigma' \circ \iota)_*: A_{\qq}^{*-r}(\widetilde{X}) \rightarrow A_{\qq}^*(X)  \cong A_{\qq}^*(B)$ is the ideal generated by 
the classes
\[f_{i,j} := \sigma_*(\beta \cdot z^i \zeta^j) \qquad \text{for }0 \leq i \leq 1,  \ 0 \leq j \leq d - 1.\]
As $\rho^*$ is an isomorphism on Chow, we omit it above and in what follows for ease of notation.
By the excision sequence \eqref{res}, we have
\[A_{\qq}^*(\V_{r,d}) = \frac{A_{\qq}^*(B)}{\langle f_{i,j} : 0 \leq i \leq 1, 0 \leq j \leq d - 1\rangle} \cong \frac{\qq[w_1, w_2, t_1, \ldots, t_d, u_1, \ldots, u_{r+d}]}{\langle f_{i,j} : 0 \leq i \leq 1, 0 \leq j \leq d - 1\rangle}.\]

Since $\sigma$ has relative dimension $d$, the codimension of $f_{i,j}$ is $(r+d)+i+j - d = r+i+j$.
Recall that there are no relations among the generators of $A^*(M) \cong A^*(B)$, so $f_{i,j}$ is a unique polynomial of codimension $i+j+r$ in the $t$'s, $u$'s and $w$'s.
We are interested in particular in the coefficients of $t_{i+j+r}$ and $u_{i+j+r}$ in this expression for $f_{i,j}$.
By the splitting principle and \eqref{ch}, we have
\begin{align*}
\beta &= c_{r+d}(\O_{\pp(T_d)}(1) \otimes \sigma^*T_{r+d} \otimes \O_{\pp \W}(1)) = \sum_{i=0}^{r+d} (\zeta + z)^{r+d-i} \sigma^*u_i \\
&=(\zeta^{r+d} + (r+d)z\zeta^{r+d-1}) + (\zeta^{r+d-1}+(r+d-1)z\zeta^{r+d-2})\sigma^*u_1 \\
& \qquad + \ldots + (\zeta^d+dz\zeta^{d-1})\sigma^*u_r + \ldots + \sigma^*u_{r+d} + \langle \sigma^*w_1, \sigma^*w_2\rangle.
\end{align*}
The push forward of any term involving $\sigma^* w_1$ or $\sigma^*w_2$ cannot contribute to the coefficient of $t_{i+j+r}$ or $u_{i+j+r}$. Since $z^2  \in  \langle \sigma^*w_1, \sigma^*w_2\rangle$, after we multiply $z^i\zeta^j$ with $\beta$, we only care about the resulting terms where the power of $z$ is $1$ (if the power of $z$ is zero, then the push forward vanishes).

To compute the push forward of such terms, iterated use of \eqref{ch} tell us (or c.f. \cite[Corollary 2.6]{HT})
\[\sigma_*(z\zeta^{d-1+i}) = \begin{cases} 0& \text{if $i < 0$} \\ 1 &\text{if $i = 0$} \\ \sum_{m_1 \cdot 1 + \ldots + m_d \cdot d = i} (-1)^{m_1 + \ldots + m_d} \cdot \frac{(m_1 + \ldots + m_d)!}{m_1! \cdots m_d!} \cdot t_1^{m_1} \cdots t_d^{m_d} &\text{if $i \geq 1$.}
\end{cases}\]
The coefficient in front of a monomial for $(m_1, \ldots, m_d)$ above is the number of ordered partitions of $i$ so that $j$ appears with multiplicity $m_j$. The terms we are interested in will come from that monomial being $1$ or $t_{i}$. In particular, we compute
\begin{align*}
f_{1, j-1} &= \sigma_*(\beta z \zeta^{j-1}) = - t_{j + r} + u_{j + r} + \ldots\\
f_{0, j} &= \sigma_*(\beta \zeta^j) = -(r+d)t_{j+r} + (d - j)u_{j+r} + \ldots.
\end{align*}
Hence, in $A_{\qq}^*(\V_{r,d})$,
the classes $t_{n}$ for  $r \leq n \leq d$ and $u_{m}$ for $r+1 \leq m \leq d$ are expressible as polynomials in $w_1, w_2, t_1, \ldots, t_{r-1}, u_1, \ldots, u_r$. Moreover, after eliminating these higher degree generators, the $f_{i,j}$ produce no additional relations in degrees less than or equal to $d$ among the restrictions to $\V_{r,d}$ of $w_1, w_2, t_1, \ldots, t_{r-1}, u_1, \ldots, u_r$. Hence, the map
\begin{equation} \label{gen}
\qq[w_1, w_2, t_1,t_2,\ldots, t_{r-1}, u_1, \ldots, u_{r}] \to A_{\qq}^*(\V_{r,d})
\end{equation}
is an isomorphism in degrees $* \leq d$.
For dimension reasons, there can be no relations among the generators $w_1, w_2, a_1, \ldots, a_r, a_2', \ldots, a_r'$ in degrees less than $d$ because
they are a list of the same number of generators in the same degrees as \eqref{gen}.
\end{proof}

\begin{Corollary} \label{ratch}
We have $A_{\qq}^*(\B_{r,\ell}) = \qq[w_1, w_2, a_1, \ldots, a_r, a_2', \ldots, a_r']$.
\end{Corollary}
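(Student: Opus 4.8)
\textbf{Proof proposal for Corollary \ref{ratch}.} The plan is to deduce the statement from Theorem \ref{ratchow} by using the open exhaustion $\B_{r,\ell} = \bigcup_m \U_m$ and working one codimension at a time. Recall from Section \ref{sps} that $\B_{r,\ell} \setminus \U_m$ has codimension $\ell + mr + 1$, which tends to infinity with $m$. So, fixing a degree $n$ and choosing $m$ with $\ell + mr \geq n$, excision makes the restriction $A^n_\qq(\B_{r,\ell}) \to A^n_\qq(\U_m)$ an isomorphism. Under the identification $\U_m \cong \V_{r,\ell+mr}$ of Section \ref{con}, Theorem \ref{ratchow} tells us that $A^*_\qq(\U_m)$ agrees, in degrees $\leq \ell + mr$ and hence in the chosen degree $n$, with the free $\qq$-algebra on the $2r+1$ generators $w_1, w_2, a_1, \ldots, a_r, a_2', \ldots, a_r'$ in their respective degrees. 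It then remains to check that these local descriptions glue to the asserted global one, the only subtlety being which generators appear.

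The point needing care is that Theorem \ref{ratchow} is phrased using the Chern data of the universal globally generated bundle $\E^{gg}_{r,\ell+mr}$ on $\V_{r,\ell+mr}$, whereas under $\U_m \cong \V_{r,\ell+mr}$ this bundle corresponds to $\E(m)|_{\U_m}$, not the restriction of the universal bundle $\E$ on $\B_{r,\ell}$. I would therefore record the twisting relation explicitly: writing $c_i(\E) = \pi^* a_i + \pi^* a_i' z$ and reducing powers of $z$ via $z^2 + w_1 z + w_2 = 0$, one finds that the Chern data $a_i^{(m)}, a_i'^{(m)}$ of $\E(m)$ satisfy $a_i^{(m)} = a_i + (\text{a polynomial in } w_1, w_2 \text{ and the } a_j, a_j' \text{ with } j < i)$, and similarly for $a_i'^{(m)}$. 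Hence $(a_i, a_i') \mapsto (a_i^{(m)}, a_i'^{(m)})$ is a graded, unitriangular — in particular invertible — substitution of $\qq[w_1, w_2, a_1, \ldots, a_r, a_2', \ldots, a_r']$, carrying the generators in Theorem \ref{ratchow} to the classes $w_1, w_2$ and $a_i, a_i'$ of $\B_{r,\ell}$ restricted to $\U_m$, and preserving the property of being an isomorphism in degrees $\leq \ell + mr$.

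Assembling these observations, I would argue that the tautological map $\qq[w_1, w_2, a_1, \ldots, a_r, a_2', \ldots, a_r'] \to A^*_\qq(\B_{r,\ell})$ is an isomorphism in every degree $n$: surjectivity holds because, after restricting to $\U_m$ with $\ell + mr \geq n$ and applying the invertible substitution above, Theorem \ref{ratchow} exhibits $A^n_\qq(\U_m)$ as generated by the restrictions of $w_1, w_2, a_i, a_i'$; and injectivity holds because a homogeneous relation of degree $n$ among these classes on $\B_{r,\ell}$ restricts to such a relation on $\U_m \cong \V_{r,\ell+mr}$, which by Theorem \ref{ratchow} (valid in degrees $\leq \ell + mr$, which includes $n$) must vanish. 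I do not expect a genuine obstacle here, since all the substantive work lies in Theorem \ref{ratchow}; the only things to keep straight are the codimension bound $\ell + mr + 1$ on $\B_{r,\ell} \setminus \U_m$ (already recorded in Section \ref{sps}) and the invertible twist identifying $\E(m)|_{\U_m}$ with $\E^{gg}_{r,\ell+mr}$.
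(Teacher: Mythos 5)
Your proposal is correct and follows essentially the same route as the paper: exhaust $\B_{r,\ell}$ by the opens $\U_m \cong \V_{r,\ell+mr}$, use the codimension bound $\ell+mr+1$ on the complement to reduce each graded piece to Theorem \ref{ratchow}, and conclude by matching generators. The only difference is cosmetic: the paper absorbs the twist $\E(m)|_{\U_m} = \E^{gg}_{r,\ell+mr}$ into a dimension count (comparing $\dim A^i_\qq(\B_{r,\ell})$ with the graded dimension of the polynomial ring, having already established surjectivity via Grothendieck--Riemann--Roch), whereas you make the unitriangular change of generators explicit --- a legitimate and slightly more hands-on way to handle the same point.
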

\begin{proof}
Recall that the codimension of the complement of $\U_{m, r, \ell}$ is $\ell + mr + 1$. Thus, choosing $m$ such that $\ell + mr + 1 > i$, we see 
\begin{align*}
\dim A_{\qq}^i(\B_{r,\ell}) &= \dim A_{\qq}^i(\U_{m, r, \ell}) = \dim A_{\qq}^i(\V_{r,mr+\ell}) \\
&= \dim \qq[w_1, w_2, a_1, \ldots, a_r, a_2', \ldots, a_r']_i.
\end{align*}
We already know $ \qq[w_1, w_2, a_1, \ldots, a_r, a_2', \ldots, a_r'] \twoheadrightarrow A^*(\B_{r,\ell})$, so we conclude it has no kernel for dimension reasons.
\end{proof}

\section{The integral Chow ring} \label{int-sec}
Our plan is to describe the integral Chow ring of $\B_{r,\ell}$ by building it up as a union of the splitting loci $\Sigma_{\vec{e}}$ using excision. 
The reader may be interested to compare the ideas and use of higher Chow groups here with work of Bae and Schmidtt in their study of the moduli stack of pointed genus $0$ curves \cite{BS}.

The Chow rings of our strata $\Sigma_{\vec{e}}$ are particularly nice. 
Fix some $\vec{e}$ and write $\O(\vec{e}) = \bigoplus_{i=1}^s \O(d_i)^{\oplus r_i}$ as in Section \ref{sps}.
There is an inclusion of groups $\prod \GL_{r_i} \times \GL_2 \hookrightarrow H_{\vec{e}}$, which induces a map on classifying stacks
\begin{equation} \label{vb}
\prod \BGL_{r_i} \times \BGL_2 \rightarrow BH_{\vec{e}}.
\end{equation}

\begin{Lemma} \label{affbun}
The map \eqref{vb} factors as a sequence of affine bundles.
In particular $A^*(\Sigma_{\vec{e}}) =A^*(\prod \BGL_{r_i} \times \BGL_2)$, which is a free $\zz$ algebra. Furthermore, over $\cc$, the first higher Chow groups satisfy
\[\CH^{*}(\Sigma_{\vec{e}},1, \zz/p\zz) = \CH^{*}\left(\prod \BGL_{r_i} \times \BGL_2, 1,\zz/p\zz\right)  = 0\]
 for all primes $p$.
\end{Lemma}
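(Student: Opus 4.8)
The plan is to realize $H_{\vec e} := \Aut(\O(\vec e))\ltimes\GL_2$ as an iterated extension of the special group $L\times\GL_2$ — where $L := \prod_{i=1}^s\GL_{r_i}$ is the block-diagonal Levi subgroup — by vector groups. Granting this, the map \eqref{vb}, which is induced by the inclusion $L\times\GL_2\hookrightarrow H_{\vec e}$, factors as a finite composition of affine-bundle maps of classifying stacks; homotopy invariance of $A^*$ (Section~\ref{2}) and of higher Chow groups (property~\eqref{htopy}) then identifies every Chow and higher Chow group of $\Sigma_{\vec e}\cong BH_{\vec e}$ with the corresponding group of $\prod\BGL_{r_i}\times\BGL_2$. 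The ring $A^*(\prod\BGL_{r_i}\times\BGL_2)$ is the polynomial ring over $\zz$ on the Chern classes of the universal bundles by Example~\ref{bglr} applied to each factor, and over $\cc$ its first higher Chow groups with $\zz/p\zz$ coefficients vanish by \eqref{hch}. Note that only this last assertion uses $\cc$; the identification $A^*(\Sigma_{\vec e}) = A^*(\prod\BGL_{r_i}\times\BGL_2)$ holds over any field, which is what is needed for the characteristic-free statements later.

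To carry this out, I would first record the group theory. The unipotent radical $U\trianglelefteq\Aut(\O(\vec e))$ is the group of block upper triangular matrices with identity diagonal blocks, so $\Aut(\O(\vec e)) = L\ltimes U$. The $\GL_2$-action on $\Aut(\O(\vec e))$, being change of coordinates on $\pp^1$, fixes the constant diagonal blocks and acts linearly on each off-diagonal block $\mathrm{Hom}(\O(d_a)^{\oplus r_a},\O(d_b)^{\oplus r_b}) = H^0(\pp^1,\O(d_b-d_a))^{\oplus r_a r_b}$; hence it preserves $L$ and $U$, and $H_{\vec e} = (L\times\GL_2)\ltimes U$. Now filter $U$ by the subgroups $U^{(j)}$ of matrices that vanish in every block $(a,b)$ with $b-a<j$, so that $U = U^{(1)}\supseteq U^{(2)}\supseteq\cdots\supseteq U^{(s)} = \{1\}$. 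A direct computation with block indices gives $[U^{(a)},U^{(b)}]\subseteq U^{(a+b)}$; in particular each $U^{(j)}$ is normal in $U$, the quotient $U^{(j)}/U^{(j+1)}\cong\bigoplus_{b-a=j}H^0(\pp^1,\O(d_b-d_a))^{\oplus r_a r_b}$ is a vector group, and it is central in $U/U^{(j+1)}$. Because block-diagonal conjugation by $L$ and the $\GL_2$-action both respect the block grading, each $U^{(j)}$ is moreover normal in all of $H_{\vec e}$.

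Finally set $K^{(j)} := (L\times\GL_2)\ltimes U^{(j)}$, so that $K^{(1)} = H_{\vec e}$, $K^{(s)} = L\times\GL_2$, and \eqref{vb} is the composition $BK^{(s)}\to BK^{(s-1)}\to\cdots\to BK^{(1)}$ of the maps induced by the inclusions $K^{(j+1)}\hookrightarrow K^{(j)}$. It remains to check that each $BK^{(j+1)}\to BK^{(j)}$ is an affine bundle. Modeling both sides by $Y/K^{(j+1)}$ and $Y/K^{(j)}$ for a common open $Y$ in a representation of $H_{\vec e}$ on which $H_{\vec e}$ acts freely with complement of large codimension, one uses $U^{(j+1)}\trianglelefteq K^{(j)}$ to write this map as $(Y/U^{(j+1)})/(L\times\GL_2)\to(Y/U^{(j)})/(L\times\GL_2)$: here $Y/U^{(j+1)}\to Y/U^{(j)}$ is a torsor under the vector group $U^{(j)}/U^{(j+1)}$, hence an affine bundle, and it is $(L\times\GL_2)$-equivariant, so it descends along the free quotient to an affine bundle $Y/K^{(j+1)}\to Y/K^{(j)}$. (Equivalently, one may invoke at each step that a split unipotent normal subgroup is invisible to $A^*$ and to higher Chow.) I expect the only real friction to be this bookkeeping — fixing the $(L\times\GL_2)$-equivariant filtration of $U$ and confirming that the model maps are honest affine bundles; there is no substantive geometric obstacle here, since the two external inputs, namely $A^*(\BGL_r) = \zz[c_1,\dots,c_r]$ and the vanishing \eqref{hch}, are already in hand.
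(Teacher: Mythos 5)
Your argument is correct and rests on the same mechanism as the paper's --- factor \eqref{vb} into affine bundles by successively quotienting out vector-group pieces of the unipotent radical, then invoke homotopy invariance of Chow and higher Chow together with the two external inputs $A^*(\BGL_r)=\zz[c_1,\ldots,c_r]$ and \eqref{hch} --- but the decomposition you use is different. You keep the full Levi $L\times\GL_2$ fixed throughout and filter the unipotent radical $U$ by distance from the diagonal, using $[U^{(a)},U^{(b)}]\subseteq U^{(a+b)}$ to see that each graded piece is a vector group normal in $H_{\vec{e}}$ modulo the next step; this is the standard central-series argument for split unipotent groups and generalizes readily. The paper instead inducts on the number $s$ of distinct degrees: it splits off the single normal vector group $N$ of all off-diagonal blocks mapping into the top summand $\O(d_s)^{\oplus r_s}$, writes $H_{\vec{e}} = N\rtimes((G'\rtimes\GL_2)\times\GL_{r_s})$, and concludes that $B((G'\rtimes\GL_2)\times\GL_{r_s})$ is an affine bundle over $BH_{\vec{e}}$ in one stroke before recursing on the smaller splitting type. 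The paper's route avoids your commutator bookkeeping (its $N$ is visibly abelian, since a product of two strictly-last-column matrices vanishes), while yours avoids having to name the intermediate automorphism groups $G'$ and makes the normality of each layer in the whole group explicit, which is exactly what you need to descend the torsor along the free $L\times\GL_2$-action. One small point worth stating explicitly in your write-up: the $L\times\GL_2$-action on each quotient $U^{(j)}/U^{(j+1)}$ is linear, so the descended map is a torsor under a vector bundle and hence an affine bundle in the sense required by the homotopy property~\eqref{htopy}; you gesture at this but it is the one place where ``respects the block grading'' needs to be upgraded to ``acts linearly on each block.''
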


\begin{proof}
We induct on $s$. The $s=1$ case is immediate. Let $G' = \Aut(\oplus_{i=1}^{s-1} \O(d_i)^{\oplus r_i})$ and $H = G' \times GL_{r_s} \subset \Aut(\O(\vec{e}))$. There is a quotient map $\Aut(\O(\vec{e})) \rightarrow H$ defined by forgetting blocks not in $H$, which expresses $\Aut(\O(\vec{e}))$ as a semidirect product $N \rtimes H$ where $N \cong H^0(\O_{\pp^1}(d_s - d_1))^{\oplus (r_1r_s)} \oplus \cdots \oplus H^0(\O_{\pp^1}(d_s - d_{s-1}))^{\oplus (r_{s-1}r_s)}$ is affine. Moreover, 
\[H_{\vec{e}} = (N \rtimes H) \rtimes \GL_2 = N \rtimes ((G' \rtimes \GL_2) \times \GL_{r_s}).\]
Now let $H_{\vec{e}}$ act on $N$ where elements of $N$ act by left multiplication and elements of $(G' \rtimes \GL_2) \times \GL_{r_s}$ act by conjugation. 
This action is affine linear, so the quotient, which is $B((G' \rtimes \GL_2) \times \GL_{r_s})$, is an affine bundle over $BH_{\vec{e}}$.
By the homotopy property, the Chow ring and higher Chow groups of $BH_{\vec{e}}$ agree with that of $B(G' \rtimes \GL_2) \times \BGL_{r_s}$, which, by induction, are isomorphic to those of $(\BGL_{r_1} \times \cdots \times \BGL_{r_{s-1}} \times \BGL_2) \times \BGL_{r_s}$. The vanishing of the first higher Chow group over $\cc$ is equation \eqref{hch}.
\end{proof}

Using the above lemma, we find that inclusion of cycle classes from strata is injective and deduce that the Chow ring  of $\B_{r,\ell}$ is torsion-free.

\begin{Lemma} \label{5}
The Chow group $A^i(\B_{r,\ell})$ is a finitely-generated free $\zz$-module for all $i$.
\end{Lemma}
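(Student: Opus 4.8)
The plan is to build up $\B_{r,\ell}$ from its splitting strata $\Sigma_{\vec e}$ by repeatedly applying the localization (excision) sequence, and to show that at each stage the relevant pushforward map on Chow groups is \emph{injective}, so that the long exact sequences split into short exact sequences of free $\zz$-modules. Since $\B_{r,\ell} = \bigcup_m \U_m$ with the complement of $\U_m$ of codimension $\to \infty$, it suffices to prove the statement for each $\U_m$, and on $\U_m$ the complement of the open dense stratum is a finite union of splitting loci $\Sigma_{\vec e}$. Order the strata by codimension (equivalently by $u(\vec e)$ from \eqref{codimeq}), so that we get a filtration of $\U_m$ by opens whose successive differences are the $\Sigma_{\vec e}$; by Lemma \ref{affbun}, each $A^*(\Sigma_{\vec e})$ is a free $\zz$-module. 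I would then induct on this filtration: given that the Chow ring of the smaller open is free, the localization sequence
\begin{equation*}
A^{*-c}(\Sigma_{\vec e}) \xrightarrow{\iota_*} A^*(\text{bigger open}) \rightarrow A^*(\text{smaller open}) \rightarrow 0
\end{equation*}
shows freeness of the middle term provided $\iota_*$ is injective, and provided there is no contribution obstructing this from the tail of the long exact sequence — i.e. provided the boundary map $A^*(\text{smaller open}) \to A^{*-c}(\Sigma_{\vec e})$ (landing in weight $0$) is zero, which is exactly an injectivity-of-$\iota_*$ statement once one knows the relevant $\CH^*(\,\cdot\,,1)$ terms behave.

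The key step — and the main obstacle — is establishing that the pushforward $\iota_*: A^*(\Sigma_{\vec e}) \to A^*(\U)$ is injective for each stratum inclusion. This is where the higher Chow machinery from Section \ref{2} enters. The strategy is to run the localization long exact sequence for higher Chow groups with $\zz/p$ coefficients: the term $\CH^{*-c}(\Sigma_{\vec e},1,\zz/p)$ maps to $\CH^*(\U,1,\zz/p)$, then to $\CH^*(\U - \Sigma_{\vec e},1,\zz/p)$, then to $\CH^{*-c}(\Sigma_{\vec e},0,\zz/p) = A^{*-c}(\Sigma_{\vec e})\otimes\zz/p$, then to $A^*(\U)\otimes \zz/p$. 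By Lemma \ref{affbun}, $\CH^*(\Sigma_{\vec e},1,\zz/p) = 0$ over $\cc$; if inductively $\CH^*(\U - \Sigma_{\vec e},1,\zz/p) = 0$ as well (true for the dense stratum since it is a classifying stack modeled by bundles over Grassmannians, and propagated along the filtration by the same exact sequence), then the connecting map $A^{*-c}(\Sigma_{\vec e})\otimes\zz/p \to A^*(\U)\otimes \zz/p$ is injective for every $p$, and simultaneously $\CH^*(\U,1,\zz/p)=0$, which feeds the induction. Injectivity mod every $p$, together with the fact that $A^{*-c}(\Sigma_{\vec e})$ is free (hence torsion-free), forces $\iota_*$ to be injective integrally: its kernel is a subgroup of a free group on which multiplication by $p$ is injective for all $p$, and the snake-type argument shows the kernel is $p$-divisible-free, hence zero. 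This gives short exact sequences $0 \to A^{*-c}(\Sigma_{\vec e}) \to A^*(\U) \to A^*(\text{smaller open}) \to 0$ of $\zz$-modules with free ends, so the middle is free, completing the induction over $\cc$.

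Finally, to pass from $\cc$ to an arbitrary field (the higher Chow vanishing only holds in characteristic zero over an algebraically closed field), I would use a rank-comparison argument as hinted in the introduction: the ranks of the graded pieces $A^i(\U_m)$ are computed by the excision sequences purely from the (known, characteristic-independent) Chow rings of the strata $\Sigma_{\vec e}$ and the now-established freeness over $\cc$. The Chow groups in any characteristic are generated by the same classes (Lemma \ref{int-gen}) and surject onto by the corresponding polynomial ring, and the $\zz$-rank in each degree is at most what the stratification gives, which equals the rank over $\cc$; since over $\cc$ these groups are free of exactly that rank and the specialization / base-change comparison of generators shows no new torsion or rank can appear, $A^i(\B_{r,\ell})$ is free of that rank in every characteristic. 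The one subtlety to be careful about is making sure the ordering of strata on $\U_m$ genuinely refines into a chain of open immersions with the $\Sigma_{\vec e}$ as locally closed complements of the right codimension — this follows from \eqref{codimeq} and the description of $\U_m^c$ in Section \ref{sps} — and that the inductive hypothesis $\CH^*(\,\cdot\,,1,\zz/p)=0$ is available at the base case, which it is because the open dense stratum of $\U_m$ is itself one of the $\Sigma_{\vec e}$ (the generic splitting type), handled by Lemma \ref{affbun}.
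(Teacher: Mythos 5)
Your proposal follows the paper's proof almost exactly in its architecture: filter by splitting loci in order of increasing $u(\vec{e})$, use the vanishing $\CH^*(\Sigma_{\vec{e}},1,\zz/p)=0$ from Lemma \ref{affbun} to propagate $\CH^*(-,1,\zz/p)=0$ along the filtration over $\cc$, obtain short exact sequences with $\zz/p$ coefficients for every prime, and deduce from finite generation and freeness of the strata's Chow groups that the pushforwards are injective integrally, so that freeness propagates up the filtration. All of that is correct and is what the paper does (the reduction to $\U_m$, which you make explicit, is implicit in the paper's induction over finite unions of strata).

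The one step that does not close as written is the passage to arbitrary ground fields. You justify it by saying that ``the specialization / base-change comparison of generators shows no new torsion or rank can appear,'' but no specialization map has been constructed, and ``no new torsion appears'' is precisely the statement to be proved; generation by the same classes (Lemma \ref{int-gen}) controls neither torsion nor injectivity of the pushforwards. The paper's argument uses only ingredients you already have in hand: over $\cc$ the short exact sequences give the rank identity $\rank A^i(\B_{r,\ell}) = \sum_{\vec{e}} \rank A^{i-u(\vec{e})}(\Sigma_{\vec{e}})$, and both sides are independent of the ground field --- the left side because Theorem \ref{ratchow} (hence Corollary \ref{ratch}) holds over any field, the right side by Lemma \ref{affbun}. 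Now over an arbitrary field, each excision sequence is still right exact, so $\rank A^i(X) \leq \rank A^{i-u(\vec{e})}(\Sigma_{\vec{e}}) + \rank A^i(U)$, with equality only if the pushforward out of the free module $A^{i-u(\vec{e})}(\Sigma_{\vec{e}})$ is injective. If any pushforward failed to be injective, summing over strata would contradict the rank identity. Hence every pushforward is injective in every characteristic, the sequences are short exact with free outer terms, and $A^i(X)$ is free by induction. Substituting this for your base-change appeal completes the proof; a minor terminological point is that the map $A^{i-u(\vec{e})}(\Sigma_{\vec{e}}) \to A^i(X)$ is the proper pushforward rather than a connecting map, the connecting map being $\CH^*(U,1,\zz/p) \to \CH^{*-u(\vec{e})}(\Sigma_{\vec{e}},0,\zz/p)$, whose vanishing is what the $\CH^*(U,1,\zz/p)=0$ hypothesis buys you.
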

\begin{proof}
Suppose $U$ is a finite union of strata and $\Sigma_{\vec{e}}$ is a disjoint stratum which is closed in $X = U \cup \Sigma_{\vec{e}}$.
By Lemma \ref{affbun}, we know $A^{i}(\Sigma_{\vec{e}})$ is a finitely-generated free $\zz$-module for all $i$.  By induction, we may also assume $A^i(U)$ is a finitely-generated free $\zz$-module. 
It suffices to show that $A^i(X)$ is also a finitely-generated free $\zz$-module.

Let us first deduce the result over $\cc$.
We have a localization long exact sequence
 \begin{align*}
 \ldots &\rightarrow \CH^{*-u(\vec{e})}(\Sigma_{\vec{e}}, 1, \zz/p) \rightarrow \CH^*(X,1,\zz/p) \rightarrow \CH^*(U,1,\zz/p) \\
 &\rightarrow A^{*-u(\vec{e})}(\Sigma_{\vec{e}}) \otimes \zz/p\zz \rightarrow A^*(X) \otimes \zz/p\zz \rightarrow A^*(U) \otimes \zz/p\zz \rightarrow 0.
 \end{align*}
By Lemma \ref{affbun}, we have $\CH^{*-u(\vec{e})}(\Sigma_{\vec{e}}, 1, \zz/p)=0$, so if
 $\CH^{*}(U, 1, \zz/p\zz) = 0$ then we have $\CH^*(X,1,\zz/p) = 0$ too. It follows by induction that $\CH^*(U,1,\zz/p) = 0$ for any finite union of strata.
Hence, we have an exact sequence
 \begin{equation} \label{forallp}
 0 \rightarrow A^{*-u(\vec{e})}(\Sigma_{\vec{e}}) \otimes \zz/p\zz \rightarrow A^*(X) \otimes \zz/p\zz \rightarrow A^*(U) \otimes \zz/p\zz \rightarrow 0
 \end{equation}
 for each prime $p$.
Because the $\zz$-modules involved are finitely-generated, the exactness of \eqref{forallp} for all $p$ implies that
\begin{equation} \label{aseq}
0 \rightarrow A^{i-u(\vec{e})}(\Sigma_{\vec{e}}) \rightarrow A^i(X)  \rightarrow A^i(U) \rightarrow 0
\end{equation}
is exact.
Since $A^{i-u(\vec{e})}(\Sigma_{\vec{e}})$ and $A^i(U)$ are finitely-generated free $\zz$-modules, so is $A^i(X)$. This concludes the proof over $\cc$.

The exactness of \eqref{aseq} over $\cc$ also tells us that
\begin{equation} \label{rkeq}
\rank A^i(\B_{r,\ell}) = \sum_{\vec{e}} \rank A^{i-u(\vec{e})}(\Sigma_{\vec{e}}).
\end{equation}
We claim that \eqref{rkeq} in fact holds over any ground field. Indeed, Theorem \ref{ratchow} holds over any field, so the
left-hand side is independent of the ground field. Similarly, using Lemma \ref{affbun}, we have $A^*(\Sigma_{\vec{e}}) = A^*(\prod \BGL_{r_i} \times \BGL_2)$ and the latter is independent of the ground field, so the right-hand side of \eqref{rkeq} is independent of the ground field.

Now, working over any ground field, we claim that the map $A^{i-u(\vec{e})}(\Sigma_{\vec{e}}) \to A^i(X)$ for attaching each stratum is injective. If not, then the image of $A^{i-u(\vec{e})}(\Sigma_{\vec{e}}) \to A^i(X)$ would have rank strictly less than $\rank A^{i - u(\vec{e})}(\Sigma_{\vec{e}})$. Then, $\rank A^i(\B_{r,\ell})$ would be less than the sum $\sum_{\vec{e}} \rank A^{i-u(\vec{e})}(\Sigma_{\vec{e}})$, violating \eqref{rkeq}. Hence, we must have an exact sequence as in \eqref{aseq} for each $\vec{e}$. Arguing as before, we know $A^{i-u(\vec{e})}(\Sigma_{\vec{e}})$ and $A^i(U)$ are finitely-generated free $\zz$-modules, so because \eqref{aseq} is exact, $A^i(X)$ is also a finitely-generated free $\zz$ module.
 \end{proof}
 
An analogous argument in cohomology can be used to show that Chow and cohomology rings of $\B_{r,\ell}$ agree.
 
 \begin{Lemma} \label{coh}
 The cycle class map $A^*(\B_{r,\ell}) \to H^{2*}(\B_{r,\ell})$ is an isomorphism.
 \end{Lemma}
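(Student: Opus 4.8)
The plan is to run the same stratification-and-localization argument as in Lemma \ref{5}, but with singular cohomology in place of Chow groups, and then check that the two answers match degree by degree. Concretely, I would work over $\cc$ (which suffices, since the statement is about the complex points), and build up $\B_{r,\ell}$ as an increasing union of finite unions of splitting loci $U \subset U \cup \Sigma_{\vec e} = X$, exactly as before. The first input I need is the cohomological analogue of Lemma \ref{affbun}: since \eqref{vb} is a sequence of affine bundles, it is in particular a sequence of $\aa^n$-bundles, so it induces isomorphisms on singular cohomology (affine bundles are Zariski-locally trivial, hence an $\aa^1$-homotopy equivalence after passing to the associated map of topological spaces — or more simply, each affine bundle deformation retracts onto its base). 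Thus $H^*(\Sigma_{\vec e}) = H^*(\prod \BGL_{r_i} \times \BGL_2)$, which is a polynomial ring on even-degree generators, concentrated in even degrees, and torsion-free. In particular $H^{\mathrm{odd}}(\Sigma_{\vec e}) = 0$.

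Next I would feed this into the Gysin/localization long exact sequence in Borel--Moore homology (equivalently, the long exact sequence of the pair $(X, U)$ in cohomology, using that $\Sigma_{\vec e}$ is smooth closed in $X$ of codimension $u(\vec e)$):
\[
\cdots \to H^{*-2u(\vec e)}(\Sigma_{\vec e}) \to H^*(X) \to H^*(U) \to H^{*-2u(\vec e)+1}(\Sigma_{\vec e}) \to \cdots.
\]
By induction on the number of strata, assume $H^*(U)$ is torsion-free and concentrated in even degrees. Since $H^{\mathrm{odd}}(\Sigma_{\vec e}) = 0$, the connecting maps $H^*(U) \to H^{*-2u(\vec e)+1}(\Sigma_{\vec e})$ vanish when $*$ is even (target is odd-degree, hence zero) and the incoming map $H^{*-2u(\vec e)}(\Sigma_{\vec e}) \to H^*(X)$ has zero source when $*$ is odd; combining these, the long exact sequence breaks into short exact sequences
\[
0 \to H^{*-2u(\vec e)}(\Sigma_{\vec e}) \to H^*(X) \to H^*(U) \to 0
\]
and $H^{\mathrm{odd}}(X) = 0$, $H^*(X)$ torsion-free. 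Taking the colimit over the exhaustion $\U_m \subset \B_{r,\ell}$ (using that the complements have codimension tending to infinity, so cohomology stabilizes in each degree) gives that $H^*(\B_{r,\ell})$ is torsion-free, concentrated in even degrees, with Poincaré series $\sum_{\vec e} P(\Sigma_{\vec e}) \cdot t^{2u(\vec e)}$.

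Finally I would compare with Chow. The cycle class map is a map of the two localization sequences (it commutes with proper pushforward and with restriction to opens), so by the five lemma and induction it is an isomorphism at each finite stage provided it is an isomorphism for each stratum $\Sigma_{\vec e}$ — and for $\Sigma_{\vec e}$ it is, because $A^*(\Sigma_{\vec e}) = A^*(\prod \BGL_{r_i} \times \BGL_2) = H^{2*}(\prod \BGL_{r_i} \times \BGL_2) = H^{2*}(\Sigma_{\vec e})$ by the standard computation of the Chow and cohomology rings of products of $\BGL$'s together with the affine-bundle invariance of both sides. Passing to the colimit finishes the proof. The main obstacle is purely bookkeeping: making sure the localization long exact sequence for cohomology of quotient stacks is available in the form needed (this is the Edidin--Graham equivariant Borel--Moore homology / the topological realization of the algebraic statement used for Chow) and that the cycle class map is compatible with it; once those formalities are in place, the vanishing of odd cohomology of the strata makes the long exact sequences split and the comparison is immediate. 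An alternative, slightly cleaner route is to cite the fact that a smooth quotient stack with an affine stratification (here, by pieces that are themselves affine-bundle towers over products of $\BGL$'s, which have affine stratifications) has Chow groups equal to cohomology; but since the excerpt only needs this one statement, running the stratification directly as above is self-contained.
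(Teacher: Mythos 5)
Your proposal is correct and follows essentially the same route as the paper: stratify by splitting loci, use the Thom/Gysin sequence together with the vanishing of odd cohomology of each $\Sigma_{\vec{e}}$ (via Lemma \ref{affbun}) to split the long exact sequences, and then apply the five lemma to compare with the Chow-theoretic short exact sequences \eqref{aseq}. The extra remarks about torsion-freeness and the colimit over the $\U_m$ are harmless elaborations of what the paper leaves implicit.
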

\begin{proof}
As before, suppose $U$ is a finite union of strata and $\Sigma_{\vec{e}}$ is a disjoint stratum which is closed in $X = U \cup \Sigma_{\vec{e}}$. 
Because $\Sigma_{\vec{e}}$ and $X$ are smooth, the cohomology of the pair $H^*(X, U)$ is the reduced cohomology of the Thom space of the normal bundle of $\Sigma_{\vec{e}} \subset X$. By the Thom isomorphism, the cohomology of this pair is then $H^*(X, U) \cong H^{* - 2u(\vec{e})}(\Sigma_{\vec{e}})$.
By Lemma \ref{affbun}, we have $H^*(\Sigma_{\vec{e}}) = H^*(\prod \BGL_{r_i} \times \BGL_2)$, which vanishes in odd degrees.
By induction, we may assume that the odd cohomology of $U$ vanishes. Thus, the long exact sequence for the pair $(X, U)$ gives an exact sequence
\begin{equation} \label{cohseq}
0 \rightarrow H^{2(i - u(\vec{e}))}(\Sigma) \rightarrow H^{2i}(X) \rightarrow H^{2i}(U) \rightarrow 0
\end{equation}
for each $i$.
The cycle class map sends the short exact sequence \eqref{aseq} to \eqref{cohseq}. Because $A^*(\BGL_{r_i}) \to H^{2i}(\BGL_{r_i})$ is an isomorphism, 
using Lemma \ref{affbun}, we see that
$A^{i - u(\vec{e})}(\Sigma_{\vec{e}}) \to H^{2(i - u(\vec{e}))}(\Sigma_{\vec{e}})$ is an isomorphism. By induction, we may assume $A^i(U) \to H^{2i}(U)$ is an isomorphism. By the $5$-lemma, we see that $A^i(X) \to H^{2i}(X)$ is also an isomorphism.
\end{proof}

\begin{proof}[Proof of Theorem \ref{main}]
Corollary \ref{ratch} determines $A_{\qq}^*(\B_{r,\ell})$. Lemma \ref{5} shows $A^*(\B_{r,\ell})$ is a subring of $A_{\qq}^*(\B_{r,\ell})$ and Lemma \ref{int-gen} identifies it as
the subring generated by the Chern classes of the sheaves $\pi_*\E(m)$. Lemma \ref{coh} shows that the cycle class map is an isomorphism.
\end{proof}

To make this subring more explicit, we provide formulas for the Chern classes of $\pi_* \E(m)$ in terms of the rational generators (working modulo $\langle w_1, w_2 \rangle$). Recall that $a_1' = \ell$ is the relative degree of $\E$.
\begin{Lemma} \label{F} Let
\begin{equation} \label{Feq}
F(t) = \sum_{i=0}^\infty f_i t^i = \exp\left(\int \frac{a_1'(a_1 +a_2t +\ldots + a_r t^{r-1}) - (a_2' + a_3't + \ldots + a_r' t^{r-2})}{1 + a_1t + \ldots + a_rt^r}dt \right).
\end{equation}
Then
\begin{equation}\label{cherngen}
\sum_{i=0}^\infty c_i(\pi_*\E(m)) t^i = F(t)(1 + a_1t + \ldots + a_rt^r)^{m+1} \quad \mod \langle w_1, w_2, t^{mr+\ell+1} \rangle.
\end{equation}
\end{Lemma}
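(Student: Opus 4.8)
The plan is to compute $\ch(\pi_*\E(m))$ via Grothendieck--Riemann--Roch on $\pi: \pp\W \to \B_{r,\ell}$, working modulo $\langle w_1, w_2 \rangle$, and then convert the Chern character back to total Chern classes, recognizing the answer as the exponential of an integral. First I would set up notation: write $\ch(\E) = \sum_i \ch_i(\E)$ and recall $c_i(\E) = \pi^*(a_i) + \pi^*(a_i')z$ with $z = c_1(\O_{\pp\W}(1))$ and $z^2 = -w_1 z - w_2$, so modulo $\langle w_1, w_2\rangle$ we may treat $z^2 = 0$. Thus every Chern class of $\E$, and hence $\ch(\E)$, is linear in $z$: write $\ch(\E) = \pi^*(\alpha) + \pi^*(\beta) z$ for power series $\alpha, \beta$ in the $a_i, a_i'$ determined by Newton's identities applied to $c(\E) = 1 + \sum(\pi^*a_i + \pi^*a_i' z)$. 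The key bookkeeping observation is that, working modulo $z^2$, if $P(t) = 1 + a_1 t + \cdots + a_r t^r$ (the ``$a$-part'' of the total Chern class) and $Q(t) = a_1' + a_2' t + \cdots + a_r' t^{r-1}$ (the ``$a'$-part'', with $a_1' = \ell$), then $\ch$ is obtained from $\log$: one gets $\alpha$ from $\log P$ and $\beta$ from the logarithmic derivative correction $Q/P$ up to a shift, since $c(\E) = P(\text{roots}) + z\,Q(\text{roots})$ symbolically.

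Next I would bring in the twist and the pushforward. We have $\ch(\E(m)) = \ch(\E)\cdot e^{mz}$, and modulo $z^2$, $e^{mz} = 1 + mz$. Then GRR gives
\[
\ch(\pi_*\E(m)) = \pi_*\big(\ch(\E(m))\cdot \td(T_\pi)\big),
\]
where $\td(T_\pi)$ is the relative Todd class; the relative tangent bundle is $\O_{\pp\W}(2)\otimes \det\W$ (dual to $\omega_\pi$ from the Lemma \ref{genslem} proof), so modulo $\langle w_1, w_2 \rangle$ we have $\td(T_\pi) = \frac{2z}{1 - e^{-2z}} = 1 + z$ (again using $z^2 = 0$). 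So the integrand is $(\pi^*\alpha + \pi^*\beta z)(1 + mz)(1+z)$, and since $z^2 = 0$ this is $\pi^*\alpha + (\pi^*\beta + (m+1)\pi^*\alpha) z$. Applying $\pi_*$ (which kills the $z^0$ term and sends $z \mapsto 1$) yields
\[
\ch(\pi_*\E(m)) = \beta + (m+1)\alpha
\]
as a power series identity in the rational Chow ring modulo $\langle w_1, w_2\rangle$. The final step is to exponentiate: $\sum c_i(\pi_*\E(m)) t^i = \exp\big(\sum (-1)^{j-1}(j-1)!\,\ch_j(\pi_*\E(m))\, t^j\big)$, and substituting $\ch(\pi_*\E(m)) = (m+1)\alpha + \beta$ splits this as $\big(\exp(\text{stuff from }\alpha)\big)^{m+1}\cdot \exp(\text{stuff from }\beta)$. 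Identifying $\exp(\text{from }\alpha) = P(t) = 1 + a_1 t + \cdots + a_r t^r$ (this is just the statement that $\alpha$ is the Chern character of the bundle with total Chern class $P$) and $\exp(\text{from }\beta) = F(t)$ gives exactly \eqref{cherngen}, with the truncation mod $t^{mr+\ell+1}$ coming from the fact that $\pi_*\E(m)$ has rank $mr + \ell + r$... actually the rank is $h^0$ on fibers which is $mr + \ell + r$, but the stated truncation $t^{mr+\ell+1}$ should be re-derived carefully — I'd track it as the degree beyond which $R^1$-corrections or the finite rank of $\pi_*\E(m)$ make higher Chern classes irrelevant for our purposes, matching the codimension bound from Section \ref{sps}.

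The main obstacle is pinning down the precise form of $F(t)$, i.e.\ verifying that the ``$\beta$-contribution'' $\exp\big(\sum (-1)^{j-1}(j-1)!\,\beta_j t^j\big)$ equals the exponential-of-an-integral in \eqref{Feq}. The cleanest route is to show the logarithmic derivative of $F$ equals $\frac{a_1'(a_1 + \cdots + a_r t^{r-1}) - (a_2' + \cdots + a_r' t^{r-2})}{1 + a_1 t + \cdots + a_r t^r}$: since $\log F = \sum (-1)^{j-1}(j-1)!\,\beta_j t^j$, we have $t\,(\log F)' = \sum (-1)^{j-1} j!\,\beta_j t^j$, and on the other side $\beta$ is essentially $\frac{d}{dz}\big|_{z}$ of $\log c(\E)$ evaluated via $z \mapsto$ the $z$-linear part. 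Concretely, writing $c(\E) = \prod(1 + x_i + y_i z)$ with $\sum x_i t$-generating-function $= P$ and the $y_i$ giving $Q$, one has $\log c(\E) = \sum \log(1 + x_i) + z\sum \frac{y_i}{1+x_i} + O(z^2)$, so $\beta$-part of $\ch$ corresponds to the generating function $\sum \frac{y_i}{1+x_i}$, and a residue/symmetric-function computation identifies $\sum_i \frac{y_i t}{1 + x_i t}$ with $\frac{t\,Q(t) \cdot (\text{something}) }{P(t)}$ — this is where the shift by $a_1'$ and the precise indexing of the $a_i'$ enters, and it needs the relation between $\{y_i\}$ as coefficients of $Q$ and the elementary-symmetric setup. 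I expect this symmetric-function manipulation, together with getting all the off-by-one shifts in the $t$-degrees right (note $Q$ starts at $a_1' = \ell$ but \eqref{Feq} has $a_2'$ as its constant term, reflecting a factor of $t$ and a derivative), to be the only genuinely delicate part; everything else is the standard GRR-mod-$\langle w_1,w_2\rangle$ computation.
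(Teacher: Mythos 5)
Your proposal is correct and follows essentially the same route as the paper: Grothendieck--Riemann--Roch modulo $\langle w_1,w_2\rangle$ with $z^2=0$ gives $\ch(\pi_*\E(m)) = c' + (m+1)c$, and exponentiating via Newton's identities separates the answer into $(1+a_1t+\cdots+a_rt^r)^{m+1}$ times the $c'$-contribution. The one step you leave unexecuted --- identifying that $c'$-contribution with the $\exp(\int \cdots)$ of \eqref{Feq} --- is exactly the computation the paper carries out in \eqref{exp}--\eqref{rats}, and your Chern-root version does close it (with roots $\xi_i = x_i + y_i z$ one gets $\tfrac{d}{dt}\log F(t) = \sum_i y_ix_i/(1+x_it) = (a_1'P(t)-Q(t))/(tP(t))$, which is the stated integrand); the truncation at $t^{mr+\ell+1}$ is, as you finally guess, just the codimension of the complement of $\U_m$, the locus where $\pi_*\E(m)$ is actually a vector bundle.
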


\begin{proof}
We will use Grothendieck-Riemann-Roch to compute the Chern characters and make use of formal manipulations that turn power sums (Chern characters) into elementary symmetric functions (Chern classes). It is convenient to package this information in generating functions.

Given some $\alpha_1, \ldots, \alpha_r$, let $\sigma_j := \sigma_j(\alpha_1, \ldots, a_r) = \sum_{i_1 < i_2 < \cdots < i_j} \alpha_{i_1}\alpha_{i_2} \cdots a_{i_j}$ denote the $j$th elementary symmetric function in the $\alpha_i$. For each $j \geq 0$, there is a polynomial $p_j(x_1, \ldots, x_r)$ such that $p_j(\sigma_1, \ldots, \sigma_r) = \alpha_1^j + \ldots + \alpha_r^j$. These polynomials satisfy
\begin{equation} \label{identity}
 \log (1+x_1t +\ldots + x_rt^r) = \sum_{j=1}^\infty \frac{(-1)^{j+1}}{j} p_j(x_1,\ldots,x_r) t^j.
\end{equation}
For any vector bundle $E$, the $j$th Chern character is related to the Chern classes by
\[ \ch_j(E) = \frac{1}{j!} p_j(c_1(E), \ldots, c_r(E)).\]
Each Chern character of the universal $\E$ on $\pi: \pp \W \to \B_{r,\ell}$ is expressible as $\ch_j(\E) = \pi^*(c_j) + \pi^*(c_j')z $ for some $c_j \in A_\qq^{j}(\B_{r,\ell})$ and $c_j' \in A_\qq^{j-1}(\B_{r,\ell})$. 
In what follows, we work modulo the ideal $\langle w_1, w_2 \rangle$, so that $z^2 = 0$. We have
\begin{align} \label{c}
c_j = \frac{1}{j!} p_j(a_1, \ldots, a_r) \qquad \text{and} \qquad c_j' = \frac{1}{j!} \sum_{i=1}^r a_i' \frac{\partial p_j}{\partial x_i}(a_1, \ldots, a_r).
\end{align}
We write $c = c_0 + c_1 + c_2 + \ldots$ and $c' = c_1' + c_2' + \ldots$, so $\ch(\E(m)) = \pi^*(c) + \pi^*(c')z$.

The relative tangent bundle of $\pi: \pp \W \rightarrow \B_{r,\ell}$ has Todd class 
$\td(T_\pi) = 1 + \frac{1}{2} c_1(T_\pi) = 1+z.$
Moreover, $\ch(\E(m)) = \ch(\E)\ch(\O_{\pp \W}(m)) = \ch(\E)(1+mz)$.
 On $\U_m$, we have $R^1\pi_* \E(m) =0$ so Grothendieck-Riemann-Roch tells us
\begin{align*}
\ch(\pi_*\E(m)) &= \pi_*(\ch(\E(m))\td(T_\pi)) \\
&= \pi_*((\pi^*(c)+\pi^*(c')z)(1+(m+1)z)) = c' + (m+1)c.
\end{align*}
To recover the Chern classes, we evaluate
{\small
\begin{align}
&\exp\left(\sum_{j=1}^\infty (j-1)! (-1)^{j+1}\ch_j(\pi_*\E(m)) t^j\right) = \exp\left(\sum_{j=1}^\infty (j-1)! (-1)^{j+1}(c_{j+1}' + (m+1)c_j )t^j\right)  \notag \\
&\qquad \qquad =\exp\left(\sum_{i=1}^r a_i'\sum_{j=1}^\infty \frac{(-1)^{j+1}}{j(j+1)} \frac{\partial p_{j+1}}{\partial x_i}(a_1,\ldots,a_r)t^j\right) (1+a_1t+\ldots+a_rt^r)^{m+1} \label{exp}.
\end{align}}
To evaluate the infinite sums inside \eqref{exp}, we consider
 \begin{align}
&\frac{d}{d t} \left(\sum_{j=1}^\infty \frac{(-1)^{j+1}}{j(j+1)} \frac{\partial p_{j+1}}{\partial x_i}(a_1, \ldots, a_r) t^j\right) = \sum_{j=1}^\infty \frac{(-1)^{j+1}}{j+1} \frac{\partial p_{j+1}}{\partial x_i}(a_1, \ldots, a_r) t^{j-1}  \notag \\
&\qquad\qquad = \sum_{j=2}^\infty \frac{(-1)^j}{j} \frac{\partial p_j}{\partial x_i}(a_1, \ldots, a_r)t^{j-2}. \notag
\intertext{Note that $\partial p_j/\partial x_i = 0$ if $j < i$. Therefore taking the partial derivative of \eqref{identity} with respect to $x_i$, we see that the above is equal to} 
&\qquad\qquad = \frac{-1}{t^2} \left(\frac{\partial}{\partial x_i} \log(1 + a_1 t + \ldots + a_r t^r) - \delta_{i1} t\right) \notag \\
&\qquad\qquad = \begin{cases} (a_1 + a_2 t + \ldots + a_rt^{r-1})/(1 + a_1 t + \ldots + a_r t^r) & \text{if $i = 1$} \\ -t^{i-2}/(1 + a_1t + \ldots + a_rt^r) & \text{if $i \geq 2$.} \end{cases} \label{rats}
\end{align}
Substituting the formal integral of the terms in \eqref{rats} into \eqref{exp} and exponentiating \eqref{exp} then gives the formula \eqref{cherngen}.
\end{proof}

\begin{remark}
The Chern classes of $\pi_*\E(m)$ (not just modulo $\langle w_1, w_2 \rangle$) are also expressible in terms of exponentials of formal integrals of rational functions in the $a_i$ and $a_i'$ and the Chern classes of $\W$. Equation \eqref{c} is made up of terms with higher partial derivatives of the polynomials $p_j$ multiplied by the Chern classes of $\W$. The sums of these higher partial derivatives that appear in expanding \eqref{exp} can again be collected into rational functions and formal integrals of rational functions in the $a_i$.
However, we no longer have the sequence $0 \rightarrow \pi_*\E(m-1) \rightarrow \pi_*\E(m) \rightarrow \E|_{B \times \{0\}} \rightarrow 0$ that holds for vector bundles on trivial families $B \times \pp^1$ so the relationship between twists is not as nice.
\end{remark}

\begin{proof}[Proof of Theorem \ref{m2}]
By Vistoli's theorem, we have $A^*(\B_{r,\ell}^\dagger) = A^*(\B_{r,\ell})/\langle w_1, w_2 \rangle$. By Lemma \ref{F}, after setting $w_1 = w_2 = 0$, the Chern classes of $\pi_*\E(m)$ for all $m$ are expressible in terms the $f_i$ in \eqref{Feq} and $a_1, \ldots, a_r$. To see that the cycle class map is an isomorphism, one may argue as in Lemmas \ref{affbun} and \ref{coh}, but using the splitting loci $\Sigma^{\dagger}_{\vec{e}}$ on $\B_{r,\ell}^\dagger$. One has $\Sigma_{\vec{e}}^{\dagger} \cong B\Aut(\O(\vec{e}))$, so the proof is essentially the same after dropping the $\BGL_2$ factor from Lemma \ref{affbun}.
\end{proof}

\begin{Corollary} \label{nfg}
The integral Chow rings $A^*(\B_{r,\ell})$ and $A^*(\B_{r,\ell}^\dagger)$ are not finitely generated as $\zz$ algebras.
\end{Corollary}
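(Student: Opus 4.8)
The plan is to reduce to the case of a trivial $\pp^1$-bundle and then exhibit an explicit quotient of $A^*(\B_{r,\ell}^\dagger)$ that is manifestly not a finitely generated $\zz$-algebra. For the reduction: by Vistoli's theorem (recalled in the introduction) the pullback $A^*(\B_{r,\ell}) \to A^*(\B_{r,\ell}^\dagger)$ is surjective, and a quotient of a finitely generated $\zz$-algebra is again finitely generated, so it is enough to prove the assertion for $A^*(\B_{r,\ell}^\dagger)$. (For $r \le 1$ both rings are polynomial rings, so we assume $r \ge 2$, which is exactly what is needed for the class $a_2'$ to be present.)

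I would then invoke the description of Theorem~\ref{m2}: $A^*(\B_{r,\ell}^\dagger)$ is the $\zz$-subalgebra of $\qq[a_1,\dots,a_r,a_2',\dots,a_r']$ generated by $a_1,\dots,a_r$ together with the coefficients $f_i$ of $F(t)$. Consider the graded $\qq$-algebra homomorphism
\[
\varphi\colon \qq[a_1,\dots,a_r,a_2',\dots,a_r'] \longrightarrow \qq[a_2']
\]
that fixes $a_2'$ and kills every other generator (equivalently, the quotient by the ideal $\langle a_1,\dots,a_r,a_3',\dots,a_r'\rangle$), and restrict it to $A^*(\B_{r,\ell}^\dagger)$. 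Under $\varphi$ the first summand of the numerator of the integrand defining $F(t)$ vanishes and the remaining summand becomes $-a_2'$, while the denominator becomes $1$; hence
\[
F(t)\;\longmapsto\;\exp(-a_2' t)\;=\;\sum_{i\ge 0}\frac{(-1)^i}{i!}\,(a_2')^i\,t^i,
\]
so that $f_i \mapsto (-1)^i(a_2')^i/i!$ and $a_1,\dots,a_r \mapsto 0$. Therefore the image of $A^*(\B_{r,\ell}^\dagger)$ under $\varphi$ is precisely the $\zz$-subalgebra
\[
S\;:=\;\zz\!\left[a_2',\ \tfrac{(a_2')^2}{2},\ \tfrac{(a_2')^3}{6},\ \dots\right]\subset\qq[a_2'],
\]
which, being the image of a ring homomorphism, is a quotient ring of $A^*(\B_{r,\ell}^\dagger)$.

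It remains to show that $S$ is not finitely generated as a $\zz$-algebra. Set $s_i := (a_2')^i/i!$, a homogeneous element of degree $i$. Since any element of $S$ is a $\zz$-polynomial in finitely many of the $s_i$, a finite generating set would be contained in $\zz[s_1,\dots,s_N]$ for some $N$, forcing $S = \zz[s_1,\dots,s_N]$. But the degree-$p$ piece of $\zz[s_1,\dots,s_N]$ is the $\zz$-span of the monomials $\prod_{i\le N}s_i^{e_i}$ with $\sum_i i\,e_i = p$, each equal to $(a_2')^p/\prod_{i\le N}(i!)^{e_i}$, whose denominator is divisible only by primes $\le N$. Choosing a prime $p > N$, the element $s_p = (a_2')^p/p! \in S$ would be such a $\zz$-combination, making $1/p!$ a fraction with no prime factor $> N$ in its denominator — impossible. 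Hence $S$, and with it $A^*(\B_{r,\ell}^\dagger)$ and $A^*(\B_{r,\ell})$, is not finitely generated over $\zz$.

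There is no serious obstacle in this argument; the two points deserving care are the specialization $F(t)\mapsto\exp(-a_2' t)$, which is immediate from the formula in Theorem~\ref{m2} once all $a_i$ and all $a_j'$ with $j \ge 3$ are set to zero, and the bookkeeping in the last step, where one must work one graded piece at a time so that only finitely many monomials ever come into play. (One could instead note that $S$ is the divided power algebra $\Gamma_\zz[x]$ on one variable, a classical non-finitely-generated ring, but the argument above is self-contained.)
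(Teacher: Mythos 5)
Your proof is correct and rests on the same key observation as the paper's: the coefficient $f_q$ contains $(a_2')^q$ with coefficient $\pm 1/q!$, so the denominators of the integral generators involve arbitrarily large primes, which is incompatible with finite generation over $\zz$. Your packaging of this via the specialization $a_1 = \cdots = a_r = a_3' = \cdots = a_r' = 0$, which exhibits the divided power algebra $\zz[(a_2')^i/i!]$ as a quotient, is a slightly more formal (and perfectly valid) way of running the paper's denominator argument, and your remark that $r \geq 2$ is needed is a reasonable caveat the paper leaves implicit.
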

\begin{proof}
Since $A^*(\B_{r,\ell})$ surjects onto $A^*(\B_{r,\ell}^\dagger)$, it suffices to prove the claim for $\B_{r,\ell}^\dagger$.
Suppose to the contrary that $A^*(\B_{r,\ell}^\dagger)$ were finitely generated as a $\zz$-algebra. Then there would exist a finite set of primes $\{p_1, \ldots, p_n\}$ such that for all $\eta \in A^*(\B_{r,\ell}^\dagger)$, there exist $m_1, \ldots, m_n \in \zz$ such that $p_1^{m_1} \cdots p_n^{m_n} \cdot \eta \in \zz[a_1, \ldots, a_r, a_2', \ldots, a_r']$. Let $q$ be a prime not in $\{p_1, \ldots, p_n\}$. Let us consider the coefficient $f_q$ in \eqref{Feq}. Performing the formal integral, we see that $F(t) = \exp(-a_2' t + \ldots)$, so $(a_2')^q$ appears in $f_q$ with coefficient $\frac{1}{q!}$. Hence, $p_1^{m_1} \cdots p_n^{m_n} \cdot f_q \notin \zz[a_1, \ldots, a_r, a_2', \ldots, a_r']$ for any $m_1, \ldots, m_n$.
\end{proof}

As an example of the utility of the formulas in Lemma \ref{Feq}, we show that $A^*(\B_{2,1}^\dagger)$ is not isomorphic to $A^*(\B_{2,0}^\dagger)$. A similar argument shows $A^*(\B_{2,1})$ is not isomorphic to $A^*(\B_{2,0})$.

\begin{Corollary} \label{b2}
(1) The integral Chow rings  $A^*(\B_{2,1}^\dagger)$ and  $A^*(\B_{2,0}^\dagger)$ are not isomorphic. Hence, $\B_{2,0}^\dagger$ is not isomorphic to $\B_{2,1}^\dagger$. 

(2) The integral Chow rings  $A^*(\B_{2,1})$ and  $A^*(\B_{2,0})$ are not isomorphic. Hence, $\B_{2,0}$ is not isomorphic to $\B_{2,1}$.
\end{Corollary}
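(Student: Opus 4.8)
The plan is to distinguish the two integral Chow rings by a graded-ring invariant, which suffices because an isomorphism of algebraic stacks induces a codimension-preserving isomorphism of Chow rings. The invariant I will use is the torsion subgroup of $A^2/(A^1 \cdot A^1)$, where $A^1 \cdot A^1 \subseteq A^2$ denotes the subgroup generated by all products $xy$ with $x, y \in A^1$. Any graded ring isomorphism preserves $A^1$, $A^2$, and multiplication, hence carries $A^1 \cdot A^1$ to $A^1 \cdot A^1$ and induces an isomorphism of the (finite) torsion subgroups of $A^2/(A^1 \cdot A^1)$; these are well-defined since by Theorems \ref{main} and \ref{m2} all the Chow groups involved are finitely generated and free.

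\emph{Part (1).} Specialize Theorem \ref{m2} (equivalently Lemma \ref{F}) to $r = 2$: the ring $A^*(\B_{2,\ell}^\dagger)$ is generated over $\zz$ by $a_1$, $a_2$, and the coefficients $f_i$ of $F(t)$. Expanding $\tfrac{1}{1 + a_1 t + a_2 t^2} = 1 - a_1 t + (a_1^2 - a_2)t^2 + \cdots$, integrating, and exponentiating yields
\[f_1 = -a_2', \quad f_2 = \tfrac12 a_2'(a_1 + a_2') \quad (\ell = 0); \qquad f_1 = a_1 - a_2', \quad f_2 = \tfrac12\bigl(a_2 - a_1 a_2' + (a_2')^2\bigr) \quad (\ell = 1).\]
In either case $A^1 = \zz\langle a_1, f_1\rangle = \zz\langle a_1, a_2'\rangle$, so $A^1 \cdot A^1 = \zz\langle a_1^2, a_1 a_2', (a_2')^2\rangle$, while the degree-$2$ part is the $\zz$-span of the degree-$2$ monomials in the generators, $A^2 = \zz\langle a_1^2, a_1 a_2', (a_2')^2, a_2, f_2\rangle$. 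For $\ell = 1$, the relation $a_2 = 2f_2 + a_1 a_2' - (a_2')^2$ shows that $\{a_1^2, a_1 a_2', (a_2')^2, f_2\}$ is a $\zz$-basis of $A^2$, whence $A^2/(A^1 \cdot A^1) \cong \zz$ is torsion-free. For $\ell = 0$, $2f_2 = a_2'(a_1 + a_2') \in A^1 \cdot A^1$ while $f_2 \notin A^1 \cdot A^1$, and one checks $A^2/(A^1 \cdot A^1) \cong \zz \oplus \zz/2\zz$. So the torsion subgroup of $A^2/(A^1 \cdot A^1)$ is $0$ for $\B_{2,1}^\dagger$ and $\zz/2\zz$ for $\B_{2,0}^\dagger$, giving (1).

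\emph{Part (2).} The same invariant works for $\B_{2,\ell}$. Here $A^1(\B_{2,\ell}) = \zz\langle w_1, a_1, a_2'\rangle$, so $A^1 \cdot A^1 = \zz\langle w_1^2, w_1 a_1, w_1 a_2', a_1^2, a_1 a_2', (a_2')^2\rangle$. By Vistoli's theorem $A^*(\B_{2,\ell}^\dagger) = A^*(\B_{2,\ell})/\langle w_1, w_2\rangle$, the quotient map $A^2(\B_{2,\ell}) \to A^2(\B_{2,\ell}^\dagger)$ is surjective with kernel $\zz\langle w_1^2, w_1 a_1, w_1 a_2', w_2\rangle$; since this kernel meets $A^1 \cdot A^1$ exactly in $\zz\langle w_1^2, w_1 a_1, w_1 a_2'\rangle$ and the image of $A^1 \cdot A^1$ is $A^1 \cdot A^1 \subseteq A^2(\B_{2,\ell}^\dagger)$, one gets a short exact sequence
\[0 \longrightarrow \zz\,\bar w_2 \longrightarrow A^2(\B_{2,\ell})/(A^1 \cdot A^1) \longrightarrow A^2(\B_{2,\ell}^\dagger)/(A^1 \cdot A^1) \longrightarrow 0.\]
For $\ell = 1$ the right-hand group is $\zz$, so the middle is torsion-free. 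For $\ell = 0$ the torsion of the middle injects into $\zz/2\zz$, and I claim it equals $\zz/2\zz$, witnessed by $c := c_2(\pi_*\E(1)) - 2a_2 - 2w_2 \in A^2(\B_{2,0})$: it restricts to $\bar f_2 \notin A^1 \cdot A^1$ on $\B_{2,0}^\dagger$, while $2c \in A^1 \cdot A^1$. I expect the verification of this last point to be the main obstacle: since Lemma \ref{F} computes the Chern classes of $\pi_*\E(i)$ only modulo $\langle w_1, w_2\rangle$, one must run enough of the Grothendieck--Riemann--Roch computation with $w_1, w_2$ retained to determine $c_2(\pi_*\E(1))$ precisely --- in particular its $w_1^2$, $w_1 a_1$, $w_1 a_2'$ coefficients --- in order to confirm that $2c$ lands in $A^1 \cdot A^1$. (Carrying this out on $\B_{2,0}$ gives $c_2(\pi_*\E(1)) = w_1^2 - 3w_1 a_1 + \tfrac12 w_1 a_2' + a_1^2 - \tfrac32 a_1 a_2' + \tfrac12 (a_2')^2 + 2a_2 + 2w_2$, so indeed $2c \in A^1 \cdot A^1$ but $c \notin A^1 \cdot A^1$.) Hence the torsion subgroup is again $0$ for $\B_{2,1}$ and $\zz/2\zz$ for $\B_{2,0}$, and $\B_{2,0} \not\cong \B_{2,1}$.
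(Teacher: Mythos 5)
Your argument is correct in substance but takes a genuinely different route from the paper. The paper distinguishes the two rings in codimension $4$: it shows that for $\ell=0$ the generator of $A^4/(A^1\cdot A^3+(A^2)^2)$ can be adjusted by decomposable classes so that a multiple lies in $A^1\cdot A^3$, while for $\ell=1$ the $-\tfrac18 a_2^2$ term in $f_4$ obstructs this. You instead use the torsion subgroup of $A^2/(A^1\cdot A^1)$, which is visibly an invariant of the graded ring, and your computations of $f_1,f_2$ and of $A^1=\zz\langle a_1,a_2'\rangle$, $A^2=\zz\langle a_1^2,a_1a_2',(a_2')^2,a_2,f_2\rangle$ are correct; the resulting quotients $\zz$ (for $\ell=1$) versus $\zz\oplus\zz/2$ (for $\ell=0$) are as you say. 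This is a cleaner and lower-codimension invariant than the paper's, and for part (2) your d\'evissage via Vistoli's theorem correctly reduces the torsion computation to producing one explicit $2$-torsion witness. I checked your Grothendieck--Riemann--Roch formula for $c_2(\pi_*\E(1))$ on $\B_{2,0}$ against the paper's conventions ($c_1(T_\pi)=2z+\pi^*w_1$, $z^2=-w_1z-w_2$) and it is right.

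The one genuine (though fixable) gap is in part (2): your witness $c=c_2(\pi_*\E(1))-2a_2-2w_2$ only lies in $A^2(\B_{2,0})$ if $a_2$ does, and you never justify that $a_2$ is an integral class on $\B_{2,0}$ (integrally, $A^2(\B_{2,0})=A^2(\U_1)$ is generated by $w_2$, $c_2(\pi_*\E)$, $c_2(\pi_*\E(1))$, and products of degree-one classes, and $a_2$ is not obviously in that lattice --- e.g.\ $c_2(\pi_*\E)=a_2+\tfrac12 a_2'(a_2'-a_1-w_1)$ alone does not give it). It does hold, because $c_2(\pi_*\E(1))-c_2(\pi_*\E)$ equals $a_2+2w_2$ plus an \emph{integral} combination of $w_1^2, w_1a_1, w_1a_2', a_1^2, a_1a_2'$; alternatively you can avoid the issue entirely by taking the manifestly integral witness $2c_2(\pi_*\E)-c_2(\pi_*\E(1))+2w_2$, which lies in $(A^1\cdot A^1)\otimes\qq$, has integral double, and has half-integral coefficients on $w_1a_2'$, $a_1a_2'$, $(a_2')^2$, hence represents the same nonzero $2$-torsion class in $A^2/(A^1\cdot A^1)$. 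With that repair, your proof is complete.
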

\begin{proof}
(1) Let $\sum f_i t^i = F(t)$ as in \eqref{Feq}. 
Then $A^*(\B_{2,\ell}^\dagger) \subset \qq[a_1, a_2, a_2']$ is the subring generated by the classes $f_i$ together with $a_1, a_2, a_2'$. We must show that we obtain non-isomorphic rings (not just different subrings) when $\ell = a_1' = 0$ versus when $\ell = a_1' = 1$.

Setting $\ell = a_1' = 0$, we see $A^2(\B_{2,0}^\dagger)$ is generated by
\begin{align*}
f_2 = \frac{1}{2}(a_1a_2' + a_2'^2), \quad a_1^2, \quad a_1a_2', \quad a_2 \qquad \qquad &(\ell = 0). \\
\intertext{On the other hand, setting $\ell = a_1' = 1$, we see that
$A^2(\B_{2,1}^\dagger)$ is generated by}
f_2 = \frac{1}{2}(-a_1a_2' + a_2'^2 + a_2), \quad a_1^2, \quad a_1a_2', \quad a_2 \qquad \qquad &(\ell =1).
\end{align*}
Now already, we can see that we have produced different subrings of $\qq[a_1, a_2, a_2']$. To see that our rings are not isomorphic though, we look at classes in codimension $4$.

Let us consider the group $A^4(\B_{2,\ell}^\dagger)/(A^1(\B_{2,\ell}^\dagger) \cdot A^3(\B_{2,\ell}^\dagger) + A^2(\B_{2,\ell}^\dagger)^2)$. By Theorem \ref{m2}, this group is generated by $f_4$. When $\ell = 0$, we have
\[f_4 = \frac{1}{24} \cdot a_2' \cdot (6a_1^3 + 11a_1^2a_2' + 6a_1a_2'^2 + a_2'^3 - 12a_1a_2 - 8a_2a_2') \in A^4(\B_{2,0}^\dagger). \]
In particular, $24f_4 \in A^1(\B_{2,0}) \cdot A^3(\B_{2,0})$. On the other hand, when $\ell = 1$, we claim that no element in the coset $f_4 + (A^1(\B_{2,\ell}) \cdot A^3(\B_{2,\ell}) + A^2(\B_{2,\ell})^2)$ has the property that a multiple of it lies in $A^1(\B_{2,\ell}^\dagger) \cdot A^3(\B_{2,\ell}^\dagger)$. To see this, we compute
\begin{align*}
f_4 &= \frac{1}{24}(-2a_1^3a_2' - a_1^2 a_2'^2 + 2a_1a_2'^3 + a_2'^4 + 2a_1^2a_2 + 6a_1a_2a_2' - 2 a_2 a_2'^2-3a_2^2) \in A^4(\B_{2,1}^\dagger) \\
&= -\frac{1}{8} a_2^2 + \ldots.
\end{align*}
But there is no integral class in 
$A^1(\B_{2,\ell}^\dagger) \cdot A^3(\B_{2,\ell}^\dagger) + A^2(\B_{2,\ell}^\dagger)^2$ that involves $\frac{1}{8} a_2^2$. Thus, there is no adjustment of $f_4$ by integral classes from lower codimension so that the result is divisible by a codimension $1$ class.

(2) The strategy here similar (the formulas are just more complicated because we must also keep track of $w_1$ and $w_2$). The codimension of the complement of $\U_{3, 2, \ell} \subset \B_{2,\ell}$ is $6 + \ell$. In particular, for $\ell= 0, 1$, we have $A^4(\B_{2,\ell}) = A^4(\U_{3,2,\ell})$. 
By Lemma \ref{genslem}, we have that $A^4(\B_{2,\ell})/(A^1(\B_{2,\ell}) \cdot A^3(\B_{2,\ell}) + A^2(\B_{2,\ell})^2)$ is generated by $t_4 := c_4(\pi_*\E(2))$ and $u_4 := c_4(\pi_*\E(3))$. These classes are determined by the splitting principle and Grothendieck--Riemann--Roch, and one can calculate them quickly using the Schubert2 package in Macaulay2. One then observes that if $\ell = 0$, then $t_4$ and $u_4$ are expressible as sum of a class in 
\[\zz[a_1, a_2', a_2, w_1,w_2]_4 \subset A^1(\B_{2,\ell}) \cdot A^3(\B_{2,\ell}) + A^2(\B_{2,\ell})^2,\]
plus a class divisible by $a_2'$. That is, when $\ell = 0$, we can choose generators of 
\[A^4(\B_{2,\ell})/(A^1(\B_{2,\ell}) \cdot A^3(\B_{2,\ell}) + A^2(\B_{2,\ell})^2)\]
 so that a multiple lies in $A^1(\B_{2,\ell})  \cdot A^3(\B_{2,\ell})$. On the other hand, if $\ell = 1$, then $t_4$ and $u_4$ contain an $a_2^2$ term with denominator $8$. However, all codimension $2$ classes have denominators at most $2$. Thus, there is no adjustment of $t_4$ or $u_4$ by integral classes from lower codimension so that the result is divisible by a codimension $1$ class.
\end{proof}

\bibliographystyle{amsplain}
\bibliography{refs}

\end{document}